\newcommand{\normmm}[1]{{\left\vert\kern-0.25ex\left\vert\kern-0.25ex\left\vert #1 
		\right\vert\kern-0.25ex\right\vert\kern-0.25ex\right\vert}}
\newtheorem{theorem}{Theorem}[section]
\newtheorem*{theorem*}{Theorem B}
\newtheorem{lemma}[theorem]{Lemma}
\newtheorem{proposition}[theorem]{Proposition}
\newtheorem{corollary}[theorem]{Corollary}
\newtheorem*{observation*}{Observation}
\newtheorem*{assumption*}{Assumption}
\newtheorem*{question*}{Question}
\theoremstyle{definition}
\newtheorem*{definition*}{Definition}
\theoremstyle{remark}
\newtheorem*{remark*}{Remark}
\numberwithin{equation}{section}
\begin{document}

	\author{Mingjie Tan}
    
	\begin{abstract}
We prove that the non-covered set in Dvortezky random covering is a set of multiplicity, by showing that the natural multiplicative chaotic measure is a Rajchman measure.
	\end{abstract}

	\keywords{}
	
	\title{The non-covered set in Dvoretzky covering is a set of multiplicity}
	\maketitle

	\setcounter{tocdepth}{2}

	\setcounter{tocdepth}{0}
	\setcounter{equation}{0}



\section{Introduction}

Let us recall the Dvoretsky random covering. Let $\{\omega_n\}_{n \geq 1}$ be a sequence of independent random variables on a probability space $(\Omega, \mathcal{B}, \mathbb{P})$ which are uniformly distributed over the circle $\mathbb{T} := \mathbb{R}/\mathbb{Z}$ identified with the interval $[0, 1)$. Let $\{\ell_n\}_{n \geq 1}$ be a sequence of positive real numbers which is decreasing to zero. For every $n \geq 1$, consider the random interval $\mathcal I_n := (\omega_n, \omega_n + \ell_n) \mod 1$. 
The Dvoretzky covering problem arised by Dvoretzky in \cite{Dvo56} is to determine the conditions on $\{\ell_n\}_{n\geq1}$ such that $\mathbb T$ is covered infinitely many times almost surely, namely,
\begin{equation*}
    \mathbb{P}\left(\mathbb{T} = \limsup_{n \to \infty} I_n \right) = 1. \label{D-problem}
\end{equation*}

This problem attracts many attentions \cite{Kah59, Erd61, Bil65}. L. A. Shepp \cite{SH72} finally found the following necessary and sufficient condition for $\mathbb T$ to be covered:
\begin{equation}
    \sum_{n=1}^{\infty} \frac{1}{n^2} \exp \left( \ell_1 + \cdots + \ell_n \right) = \infty. \label{S}
\end{equation}

When Sheep's condition \eqref{S} does not hold, the finitely covered set 
$$
F:=\mathbb{T}\setminus\limsup_{n\rightarrow\infty}I_n
$$
was also studied in \cite{Kah85b}. 
In particular, in the case where $\ell_n=\frac{\alpha}{n}$, $\alpha\in(0,1)$, Kahane calculated the Hausdorff dimension $\dim_HF=1-\alpha$.

 We consider the non-covered set 
 $$
 E :=\mathbb{T}\setminus \left(\bigcup_{n=1}^\infty I_n\right).
 $$
 This is a closed set in $\mathbb{T}$, a subset of $F$. We will study it from the point of view of harmonic analysis.

A subset $S$ of the circle $\mathbb{T}$ is called a \emph{set of uniqueness }($U$-set) if any trigonometric series $\sum c_n e^{i n x}$ that converges to zero outside $S$ must have all coefficients $c_n = 0$. Otherwise, it is called a \emph{set of multiplicity }($M$-set). See \cite{KL95} and \cite{Lyo20} for details.

It is proved that a closed subset \( S \subset \mathbb{T} \) is an $M$-set if and only if there exists a non-zero distribution supported on \( S \) whose Fourier coefficients converge to 0 as \( |n| \to \infty \) (\cite[p.70]{KL95}). If there exists a non-zero measure supported on \( S \) whose Fourier coefficients converge to $0$ as \( |n| \to \infty \) (such a measure is called Rajchman measure) then \( S \) is called an \( M_0 \)-set.

 In this paper, we can prove that the non-covered set $E$ is, almost surely, a $M_0$-set when $\ell_n=\frac{\alpha}{n}$, for some $0<\alpha<1$. A result for more general $\ell_n$ will be stated in Theorem~\ref{multiple_convolution_result} below. In particular, if $0<\alpha\leq\frac{1}{2}$, we can take $d=1$, in this case, $\mu_D$ is a Rajchman measure.

To prove $E$ is a $M_0$-set, e will consider the following natural measure on $E$ (see \cite{Kah85b}). 
For any $t\in\mathbb [0,1)$ and $n\geq1$,
    let 
    $$
    P_n(t) := \frac{1-\textbf{1}_{(0,\ell_n)}(t-\omega_n)}{1-\ell_n},\quad M_n(t)\coloneqq \prod_{k=1}^nP_k(t), \quad \mu_n(\mathrm{d}t):=M_n(t)\mathrm{d}t.
    $$
The measure $\mu_n$ is absolutely continuous with respect to the Lebesgue measure $\mathrm{d}t$ with density $M_n(t)$. It is shown in \cite{Kah85b} that the random measure $\mu_n$, almost surely, converges weakly to a random measure
\begin{equation}
\mu_{D}(\mathrm{d}t) 
:= \lim_{n\rightarrow\infty} \mu_n(\mathrm{d}t) \quad \text{a.s.}  \label{def_random_coverings}
\end{equation}
We call $\mu_D$ the Dvoretzky measure. 

Our result will be proved under the following two assumptions.
\begin{align}
&\sum_{n=1}^{\infty} \ell_n = \infty, \tag{A}\label{A} \\
&\sum_{n=1}^{\infty} (\ell_n - \ell_{n+1}) \exp\left( \sum_{k=1}^{n} \ell_k \right) < \infty. \tag{B}\label{B}
\end{align}
Assumption \eqref{A} and Borel-Cantelli lemma imply that, almost surely, the set \(E\) has zero Lebesgue measure. Assumption \eqref{B} ensures that \(E\) is almost surely non-empty by Shepp's theorem mentioned above, as condition \eqref{B} violates Shepp's condition. The condition \eqref{B} also implies that $\sum_{n=1}^\infty\ell_n^2<\infty$ (cf. \cite[p.145]{Kah85b}).

The function $K(t)\coloneqq \exp{(\sum_{n=1}^\infty(\ell_n-|t|)_+)}$, for $|t|<\frac{1}{2}$, plays a key role in the study of Dvoretzky random covering. Our main result is stated as follows.

\begin{theorem}
 Suppose that the sequence $(\ell_n)$ satisfies the assumptions \eqref{A}, \eqref{B} and there exists an integer $d\geq1$ such that 
\begin{equation}
    \int_{\mathbb{T}^d}K(t_1)\cdots K(t_d) K(-\sum_{i=1}^dt_i)\mathrm{d}t_1\cdots \mathrm{d}t_d<\infty.
    \label{L1condition_RC}
\end{equation}
 Then the d-th convolution $\ast^d\mu_{D}$ of $\mu_D$ is a.s. absolutely continuous.
\label{multiple_convolution_result}
\end{theorem}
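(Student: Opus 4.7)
My approach is to prove the stronger statement that $\ast^d\mu_D$ admits an $L^2(\mathbb{T})$ density almost surely, which implies absolute continuity. The proof rests on a second-moment bound of the form $\mathbb{E}[\mu_D^{*d}(\phi)^2]\le C\,\|\phi\|_{L^2}^2\cdot I_d$, where $I_d$ denotes the integral appearing in~\eqref{L1condition_RC}.

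The key ingredient is a sharp bound on the joint correlations of $M_n$. By independence of $\omega_1,\ldots,\omega_n$ and the definition of $M_n$, for distinct $u_1,\ldots,u_r\in\mathbb{T}$ one has
\begin{equation*}
\mathbb{E}\!\left[\prod_{i=1}^r M_n(u_i)\right] \;=\; \prod_{k=1}^n \frac{\sum_{j=1}^r (g_j(u)-\ell_k)_+}{(1-\ell_k)^r},
\end{equation*}
where $g_1(u),\ldots,g_r(u)$ are the cyclic gaps of $\{u_1,\ldots,u_r\}$ on $\mathbb{T}$ (so $\sum_j g_j=1$). Taking $\log$, applying $\log(1+x)\le x$, and using that $\sum_k \ell_k^2<\infty$ (a consequence of~\eqref{B}), one obtains the gap-product bound $\mathbb{E}[\prod_i M_n(u_i)]\le C_r\prod_j K(g_j(u))$.

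With $r=2d$ and the variables split as $(t_1,\ldots,t_d,s_1,\ldots,s_d)$, $\mathbb{E}[\mu_n^{*d}(\phi)^2]$ is a $2d$-fold integral weighted by $\phi\big(\sum_i t_i\big)\phi\big(\sum_j s_j\big)$ against the $2d$-point correlation. Translation invariance along the linear constraint $\sum_i t_i=\sum_j s_j=:x$ factors out $\int\phi(x)^2\,dx=\|\phi\|_{L^2}^2$; combined with the gap bound and a careful parametrization of the residual $(2d-1)$-dimensional configuration, this should yield the desired bound with $I_d$ on the right. Passing to the limit via weak convergence $\mu_n^{*d}\to\mu_D^{*d}$ (a consequence of $\mu_n\to\mu_D$ and continuity of convolution) and Fatou, then extending $\phi\mapsto\mu_D^{*d}(\phi)$ from a countable dense subset of $L^2(\mathbb{T})$ to a bounded operator $L^2(\mathbb{T})\to L^2(\mathbb{P})$ and invoking the Hilbert-space kernel theorem, produces a random $L^2$ function $f$ with $\mu_D^{*d}=f\,dx$ almost surely.

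The main obstacle is the combinatorial reduction from the cyclic-gap bound (which has $2d$ factors of $K$) to the stated integral $I_d$ (which has only $d+1$ factors). This reduction must exploit the linear constraint $\sum_i t_i=\sum_j s_j$ together with a favorable factorization of the cyclic gaps; it is the technical crux of the argument.
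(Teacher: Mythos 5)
Your plan has a decisive logical flaw at its final step, independent of the unresolved combinatorics. The estimate $\mathbb{E}\bigl[\mu_D^{*d}(\phi)^2\bigr]\le C\|\phi\|_{L^2}^2$ defines a bounded operator $L^2(\mathbb{T})\to L^2(\mathbb{P})$, but boundedness does not produce an $L^2(\mathbb{T})$ density (nor even absolute continuity): the random measure $\mu=\delta_U$ with $U$ uniform on $\mathbb{T}$ satisfies $\mathbb{E}[\mu(\phi)^2]=\|\phi\|_{L^2}^2$ exactly, yet is purely atomic. There is no ``kernel theorem'' converting a bounded operator into an a.s.\ $L^2$ kernel; one would need the Hilbert--Schmidt property, i.e.\ $\sup_n\mathbb{E}\|{\ast^d M_n}\|_{L^2(\mathbb{T})}^2=\sup_n\sum_m\mathbb{E}\bigl[|\widehat{\mu_n}(m)|^{2(d+1)}\bigr]<\infty$, which is a different and (because of intermittency, $\mathbb{E}[|\widehat{\mu}(m)|^{2(d+1)}]\gg(\mathbb{E}|\widehat{\mu}(m)|^2)^{d+1}$) substantially stronger requirement than \eqref{L1condition_RC}. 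Worse, the bound you aim for is essentially free --- by stationarity in law, $\mathbb{E}[\mu_n^{*d}(\phi)^2]=\int\!\!\int\phi(x)\phi(y)G_n(x-y)\,\mathrm{d}x\,\mathrm{d}y\le\|\phi\|_{L^2}^2\|G_n\|_{L^1}$ with $\|G_n\|_{L^1}=\mathbb{E}[\mu_n(\mathbb{T})^{2(d+1)}]$, bounded by Fan's moment theorem without ever invoking \eqref{L1condition_RC} --- so it cannot be the estimate that carries the content of the theorem. Two smaller points: your $r$-point formula is only an upper bound, not an identity (when $\ell_k$ exceeds a cyclic gap the interval behind one point overflows into the next gap); and $\ast^d\mu_D$ in this paper is defined with $d+1$ factors of $M_n$ (see \eqref{def-convolution}), which is why \eqref{L1condition_RC} has $d+1$ factors of $K$ --- your count of $2d$ points in the second moment should be $2(d+1)$.

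The ``technical crux'' you flag --- reducing a global $2(d+1)$-gap bound to the $(d+1)$-factor integral $I_d$ --- is real and is precisely what a single global second-moment estimate cannot do: configurations in which $t_i$ is close to $t_j$ for $i\ne j$ produce extra $K$-singularities that $I_d$ does not control. The paper circumvents this by never attempting a global bound. It proves instead that the densities $\ast^d M_n$ are Cauchy for $\mathbb{E}[\|\cdot\|_{L^1(\mathbb{T})}^{1/2}]$, splitting $\mathbb{T}^d$ into a near-diagonal set $A_\delta(t)$ and its complement. On $A_\delta(t)$ the contribution is uniformly small in $n$ (Proposition~\ref{energy-integral}), using only the two-point bound $\mathbb{E}[M_n(t)M_n(t')]\asymp K_n(t-t')$, $L^p$-boundedness of $\mu_n(\mathbb{T})$, and the subadditivity of $x\mapsto x^{1/2}$; on the complement, all points are pairwise $\delta$-separated, so the $2(d+1)$-point correlation genuinely factorizes as $C\prod_{i=1}^{d+1}K(t_i-t_i')$ (equation \eqref{dominated-cov-condition}), whose integral is exactly $I_d$, and a Berestycki-type $L^2(\mathbb{P})$-Cauchy argument for the restricted integrals $J_n$ finishes the proof. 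An a.s.-convergent subsequence in $L^1(\mathbb{T})$ then identifies the weak limit as absolutely continuous. If you want to salvage your approach, you must adopt this decomposition (or something equivalent) rather than seek one global moment bound, and you must replace the operator-norm conclusion by genuine $L^1$ or $L^2$ convergence of the densities themselves.
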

Theorem~\ref{multiple_convolution_result}, together with the Riemann-Lebesgue lemma, implies immediately the following corollary.
\begin{corollary}
    Under the assumptions of Theorem~\ref{multiple_convolution_result}, $\mu_{D}$ is a Rajchman measure and the non-covered set $E$ is a $M_0$-set.\label{cor1}
    \end{corollary}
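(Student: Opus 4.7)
My proof plan for Corollary~\ref{cor1} is short, because the substantive work is done inside Theorem~\ref{multiple_convolution_result}; the corollary is an exercise in extracting Fourier decay from the $L^1$ density it produces.

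First, I would record the standard facts about $\mu_D$ that come out of Kahane's construction. The total mass $\mu_n(\mathbb T)$ is a positive martingale of mean $1$, since each factor $P_k$ has $\int_{\mathbb T} P_k\,dt = 1$. Assumption \eqref{B} is exactly the hypothesis that bounds $\sup_n \E[\mu_n(\mathbb T)^2]$ (this is the classical $L^2$ computation in \cite{Kah85b}), so $\mu_n(\mathbb T) \to \mu_D(\mathbb T)$ in $L^2$ with $\E[\mu_D(\mathbb T)] = 1$, and a tail $0$-$1$ law upgrades $\PP(\mu_D(\mathbb T) > 0) > 0$ to $\mu_D(\mathbb T) > 0$ almost surely. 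Since each density $M_n$ vanishes on $\bigcup_{k \le n} I_k$, the weak limit $\mu_D$ is supported on the closed non-covered set $E$.

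Next, I would invoke Theorem~\ref{multiple_convolution_result}: under \eqref{L1condition_RC}, the $d$-fold self-convolution $\ast^d \mu_D$ has an $L^1$ density $f_D$ almost surely. Because convolution multiplies Fourier--Stieltjes coefficients, $\widehat{\ast^d \mu_D}(n) = \widehat{\mu_D}(n)^d$, while $\widehat{\ast^d \mu_D}(n) = \widehat{f_D}(n)$. The Riemann--Lebesgue lemma applied to $f_D \in L^1(\mathbb T)$ gives $|\widehat{\mu_D}(n)|^d = |\widehat{f_D}(n)| \to 0$ as $|n| \to \infty$, and taking $d$-th roots yields $\widehat{\mu_D}(n) \to 0$.

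Combining the two steps, $\mu_D$ is almost surely a nonzero positive measure supported on $E$ whose Fourier coefficients vanish at infinity, i.e.\ a Rajchman measure carried by $E$. By the definition recalled in the introduction, $E$ is therefore almost surely an $M_0$-set. The only non-routine issue is guaranteeing $\mu_D \neq 0$, and this is exactly why assumption \eqref{B} is imposed: it supplies both the $L^2$ martingale bound and, by violating Shepp's criterion \eqref{S}, ensures $E \neq \emptyset$ in the first place.
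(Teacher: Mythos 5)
Your argument is exactly the one the paper intends: the paper derives Corollary~\ref{cor1} from Theorem~\ref{multiple_convolution_result} via the Riemann--Lebesgue lemma applied to the $L^1$ density of $\ast^d\mu_D$, using $\widehat{\ast^d\mu_D}(n)=\widehat{\mu_D}(n)^d$. Your additional care about non-degeneracy of $\mu_D$ (via assumption \eqref{B} and Fan's/Kahane's $L^p$ convergence of the martingale $\mu_n(\mathbb{T})$) and about the support being contained in $E$ correctly fills in details the paper leaves implicit.
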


    A canonical example is $\ell_n=\frac{\alpha}{n}$, $\alpha\in(0,1)$. In this case, Corollary~\ref{cor1} says that $E$ is almost surely a $M_0$-set, because if we take $d\geq \frac{\alpha}{1-\alpha}$, \eqref{L1condition_RC} is satisfied (cf. \cite[Lemma~2.3]{GV24}). In particular, if $0<\alpha<\frac{1}{2}$, we can take $d=1$, and in this case, $\mu_D$ is a Rajchman measure.

Garban and Vargas, in their remarkable work of \cite{GV24}, explored the Rajchman property of another type of multiplicative chaos measure, specifically the Gaussian Multiplicative Chaos measure, which can be formally understood as:
\begin{equation*}
\label{eq:GMC}
M_{GMC,\gamma}(\mathrm{d}t) := e^{\gamma \varphi(t) - \frac{\gamma^2}{2} \mathbb{E}[\varphi(t)^2]} \mathrm{d}t
\end{equation*}
where  \(\gamma \in (0,\sqrt{2})\) and \(\varphi\) is the Gaussian Free Field on \(\mathbb{T}\) with covariance kernel
\[
\mathbb{E}[\varphi(t) \varphi(t^\prime)] = \ln \frac{1}{|e^{2\pi i t} - e^{2\pi i t^\prime}|}.
\]
See \cite{Kah85a} and \cite{Ber17} for its detailed construction.
Due to the inherent complexity in the definition of the Gaussian Multiplicative Chaos $\mu_{GMC,\gamma}$, estimating the Fourier coefficients $\widehat{\mu}_{GMC,\gamma}(n)$ poses significant challenges. Garban and Vargas addressed this issue by considering multiple convolutions of the $\mu_{GMC,\gamma}$, transforming it into an integrable function. 
We will follow their method. 

\section{proof of Theorem~\ref{multiple_convolution_result}}
The basic idea is the following. Assume that a sequence of measures $\nu_n$ converges weakly to $\nu_\infty$ and $\nu_n$ admits its density $f_n$ with respect to Lebesgue measure. If some subsequence $\left(f_{n_k}\right)$ converges in $L^1-$norm, then the limit measure $\nu_\infty$ is absolutely continuous. Apply this idea to the convolution of $\mu_n$ could allow us to show that $\mu_\infty$ is Rajchman.
The following lemma deals with a sequence of random density functions such that $f_n(\omega,\cdot)\in L^1(\mathbb{T})$ almost surely.

\begin{lemma}
    Let $(f_n(\omega,t))_{n\geq1}$ be a sequence of random functions. Suppose that there exists $\alpha \in (0,1)$ such that 
    \begin{equation}
        \underset{n,m\rightarrow \infty}{\lim}\mathbb E \left[\|f_n(\omega,\cdot) -f_m(\omega, \cdot)\|_{1}^\alpha\right] = 0. \label{alpha}
    \end{equation}
    Then there exists a subsequence  $\{f_{n_k}\}_{k\geq 0}$ that converges in $L^1(\mathbb T)$ almost surely.
    \label{core_lemma}
\end{lemma}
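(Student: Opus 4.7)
The plan is to convert the quantitative fractional-moment Cauchy condition \eqref{alpha} into an honest a.s.\ Cauchy statement along a well-chosen subsequence, by combining Markov's inequality with the Borel--Cantelli lemma.

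First, I will observe that \eqref{alpha} implies $\|f_n-f_m\|_1\to 0$ in probability as $n,m\to\infty$: indeed, for any $\varepsilon>0$, Markov's inequality gives
\[
\mathbb{P}(\|f_n-f_m\|_1>\varepsilon)=\mathbb{P}(\|f_n-f_m\|_1^{\alpha}>\varepsilon^{\alpha})\le \varepsilon^{-\alpha}\,\mathbb{E}\bigl[\|f_n-f_m\|_1^{\alpha}\bigr]\to 0.
\]
This is the only place where the hypothesis is actually used, so no further use of the $L^\alpha$ control of the norm is needed.

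Next, I will extract a rapidly Cauchy subsequence. Using \eqref{alpha}, I can choose indices $n_1<n_2<\cdots$ such that
\[
\mathbb{E}\bigl[\|f_{n_{k+1}}-f_{n_k}\|_1^{\alpha}\bigr]\le 2^{-2k}\quad\text{for every }k\ge 1.
\]
Applying Markov's inequality with threshold $\varepsilon_k=2^{-k/\alpha}$ yields
\[
\mathbb{P}\!\left(\|f_{n_{k+1}}-f_{n_k}\|_1>2^{-k/\alpha}\right)\le 2^{k}\cdot 2^{-2k}=2^{-k},
\]
which is summable in $k$. By the Borel--Cantelli lemma, almost surely only finitely many of the events $\{\|f_{n_{k+1}}-f_{n_k}\|_1>2^{-k/\alpha}\}$ occur. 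Since $\sum_k 2^{-k/\alpha}<\infty$ (here it is convenient that $\alpha>0$; the value $\alpha<1$ is not essential to this step), we conclude $\sum_k\|f_{n_{k+1}}-f_{n_k}\|_1<\infty$ a.s.

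Finally, on the full-probability event where this series converges, the sequence $(f_{n_k})$ is Cauchy in the Banach space $L^1(\mathbb{T})$ and hence converges to some $f_\infty(\omega,\cdot)\in L^1(\mathbb{T})$, giving the desired subsequence. The whole argument is essentially the standard passage \emph{convergence in probability $\Rightarrow$ a.s.\ convergence along a subsequence}, so I do not foresee a genuine obstacle; the only mild subtlety is to select the thresholds $\varepsilon_k$ so that the Borel--Cantelli bound is summable \emph{and} $\sum_k\varepsilon_k<\infty$, which is why we take the exponential rate $2^{-2k}$ for the moments rather than merely $2^{-k}$.
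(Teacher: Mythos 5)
Your proof is correct and follows essentially the same route as the paper: extract a subsequence with rapidly decaying fractional moments, apply Markov's inequality with thresholds $2^{-k/\alpha}$, and invoke Borel--Cantelli to get $\sum_k\|f_{n_{k+1}}-f_{n_k}\|_1<\infty$ almost surely, whence $L^1$-completeness yields the a.s.\ limit. The only differences are cosmetic (your moment bound $2^{-2k}$ versus the paper's $3^{-k}$), and your write-up is in fact slightly more complete since you spell out the final appeal to completeness of $L^1(\mathbb{T})$.
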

\begin{proof}
    The hypothesis \eqref{alpha} implies that there is a subsequence $\{f_{n_k}\}_{k\geq1}$ such that 
    \begin{equation*}
         \forall k \geq1, \qquad \mathbb E \left[\|f_{n_k}(\omega,\cdot) -f_{n_{k+1}}(\omega,\cdot)\|_{1}^\alpha)\right] \leq \frac{1}{3^k}.
    \end{equation*}
By the Markov inequality, for any $k\geq1$,
\[
\mathbb{P}\left[\|f_{n_k}-f_{n_{k+1}}\|_1>\frac{1}{2^{\frac{k}{\alpha}}}\right]\leq(\frac{2}{3})^k.
\]
    Then Borel-Cantelli Lemma implies that $\sum_k\| f_{n_k} - f_{n_{k+1}} \|_{1} < \infty$ almost surely.
 \end{proof}
 
 In order to show Theorem~\ref{multiple_convolution_result}, by Lemma~\ref{core_lemma}, it suffices to show
\begin{equation}
        \underset{n,m\rightarrow \infty}{\lim}\mathbb E \left[\|\ast^d M_n(t) -\ast^d M_m(t)\|_1^\frac{1}{2}\right] = 0. \label{ultimate_goal}
    \end{equation}
where $\ast^d M_n(t)$ is the $d$-fold convolution of the density $M_n(t)$ of $\mu_n$. It is clear that 
\begin{equation}
\ast^d M_n(t) = \int_{\mathbb{T}^d} M_n(t_1)\cdots M_n(t_d)M_n(t -\sum_{i=1}^dt_i) \mathrm{d}t_1\mathrm{d}t_2\cdots \mathrm{d}t_d. \label{def-convolution}
\end{equation}

For simplicity, the integrand in \eqref{def-convolution} will be denoted
\begin{equation}
     F_n(\hat{t},t) :=  M_n(t_1)\cdots M_n(t_d)M_n(t_{d+1})\label{def_F_n}
\end{equation}
where 
$$
\hat{t}=\left(t_1,\cdots,t_d\right)\quad \text{and}\quad t_{d+1} = t -\sum_{i=1}^dt_i.
$$

To facilitate calculations, we consider the distance on \(\mathbb{T}\) as \(\|t\| := \inf_{n \in \mathbb{Z}} |t - n|\) for any $t\in\mathbb{T}$. Note that when $|t|\leq\frac{1}{2}$, we have $|t|=\|t\|$.

We will need the following well-known estimation.
\begin{lemma} Let $K_n(t)\coloneqq \exp{(\sum_{k=1}^n(\ell_k-|t|)_+)}$ for any $t\in\mathbb{T}$. Then
there is $C> 0$ such that for any $t,t^\prime \in \mathbb T$ and any $n\geq1$, one has
\label{lemma_two_product}
    \begin{equation}
        \frac{1}{C}K_n(t-t^\prime)\leq\mathbb E[M_n(t)M_n(t^\prime)] \leq CK_n(t-t^\prime).
        \label{two_product_expectation_estimate}
    \end{equation}
\end{lemma}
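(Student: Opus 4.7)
The plan is to use independence of the $\omega_k$'s to reduce the expectation to a product, compute each factor explicitly from the uniform distribution of $\omega_k$, and then extract the claimed two-sided bound by taking logarithms and Taylor expanding, absorbing the remainder into a constant via the consequence $\sum_k \ell_k^2 < \infty$ of assumption \eqref{B}.

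By independence of the $\omega_k$'s,
\[
\mathbb{E}[M_n(t)M_n(t')] \;=\; \prod_{k=1}^n \mathbb{E}[P_k(t)P_k(t')].
\]
Since $\omega_k$ is uniform on $\mathbb{T}$, I compute $\mathbb{E}[\mathbf{1}_{(0,\ell_k)}(t-\omega_k)] = \ell_k$ and, for $\ell_k \le 1/2$ (which holds once $k$ is large enough since $\ell_n \downarrow 0$),
\[
\mathbb{E}\bigl[\mathbf{1}_{(0,\ell_k)}(t-\omega_k)\,\mathbf{1}_{(0,\ell_k)}(t'-\omega_k)\bigr] \;=\; \bigl(\ell_k - \|t-t'\|\bigr)_+,
\]
this being the length of the intersection of the two arcs $(t-\ell_k,t)$ and $(t'-\ell_k,t')$ on $\mathbb{T}$. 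Setting $s := \|t-t'\|$ and expanding the product in the definition of $P_k$, a short calculation yields
\[
\mathbb{E}[P_k(t)P_k(t')] \;=\; \frac{1 - 2\ell_k + (\ell_k - s)_+}{(1-\ell_k)^2} \;=\; 1 + \frac{(\ell_k - s)_+ - \ell_k^2}{(1-\ell_k)^2}.
\]

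Since $\ell_k \to 0$, the fraction $u_k$ on the right satisfies $|u_k| \le C\ell_k$ for all $k$ large. Hence $\log(1+u_k) = u_k + O(u_k^2) = u_k + O(\ell_k^2)$, and using $(1-\ell_k)^{-2} = 1 + O(\ell_k)$ gives
\[
\log \mathbb{E}[P_k(t)P_k(t')] \;=\; (\ell_k - s)_+ + O(\ell_k^2),
\]
with the constant in $O(\cdot)$ independent of $s$ and $k$. For the finitely many initial indices $k$ where $\ell_k$ is not small, $\mathbb{E}[P_k(t)P_k(t')]$ is bounded above and below by positive constants uniformly in $t,t'$, so these factors only shift the overall multiplicative constant.

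Summing over $k$ and invoking $\sum_k \ell_k^2 < \infty$, I obtain
\[
\log \mathbb{E}[M_n(t)M_n(t')] \;=\; \sum_{k=1}^n (\ell_k - s)_+ + O(1) \;=\; \log K_n(t-t') + O(1),
\]
uniformly in $t,t',n$; exponentiating yields the lemma. The computation is essentially routine. The only delicate point is verifying that the Taylor remainder is genuinely $O(\ell_k^2)$ uniformly in $s \in [0,1/2]$, which follows because both $(\ell_k - s)_+$ and $\ell_k^2$ are $O(\ell_k)$ and hence $u_k^2 = O(\ell_k^2)$ with a constant free of $s$. No substantive obstacle is expected.
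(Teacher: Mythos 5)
Your proof is correct and follows essentially the same route as the paper: the same explicit formula $\mathbb{E}[P_k(t)P_k(t')] = \frac{1-2\ell_k+(\ell_k-\|t-t'\|)_+}{(1-\ell_k)^2}$, followed by comparing each factor to $e^{(\ell_k-s)_+}$ up to a multiplicative error controlled by $\sum_k \ell_k^2<\infty$ (the paper packages your Taylor expansion of the logarithm as the elementary inequality $e^{-x-cx^2}\le 1-x\le e^{-x}$). The only caveat --- shared with the paper's own sketch --- is that your claim that the finitely many initial factors with $\ell_k$ not small are bounded \emph{below} by a positive constant uniformly in $t,t'$ can fail if some $\ell_k>1/2$ (two arcs of length $\ell_k$ may then cover $\mathbb{T}$, making $\mathbb{E}[P_k(t)P_k(t')]=0$), so one implicitly needs $\ell_k\le 1/2$ for all $k$, or must absorb those indices into the upper bound only.
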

\begin{proof}This fact is well known (cf. \cite{Kah85b, Fan89, Fan91}). For the convenience of the reader, we sketch the proof.

Firstly, for each $k\geq1$ such that $\ell_k\leq\frac{1}{2}$, we have
\begin{equation*}
\begin{split}
\mathbb{E}\left[P_k(t)P_k(t^\prime)\right] &= \frac{1 - 2\ell_k + \mathbf{1}_{(0,\ell_k)}\ast\mathbf{1}_{(0,\ell_k)}(t-t^\prime}{(1 - \ell_k)^2} 
= \frac{1 - 2\ell_k + (\ell_k-|t-t^\prime|)_+}{(1 - \ell_k)^2}.
\end{split}
\end{equation*}

Then we finish the estimation by using the elementary inequality: there is $c>0$ such that for any $x\in(-\frac{1}{4},\frac{1}{4})$,
    $
    e^{-x-cx^2} \leq 1-x \leq e^{-x},
    $
and the fact $\sum_{n=1}^\infty\ell_n^2<\infty$.
\end{proof}

Now we are going to prove \eqref{ultimate_goal}. Let us first look at $F_n(\hat{t},t_{d+1})$ involved in $\ast^dM_n(t)$. As \(\sum_{n=1}^{\infty} \ell_n = \infty\) we have \(K(0) = \infty\). This suggests that if two of points \(t_1, \ldots, t_d, t_{d+1}\) in the definition of $F_n(\hat{t},t_{d+1})$ are close, the function $F_n(\hat{t},t_{d+1})$ takes large values. In order to control $\ast^dM_n(t)$, we decompose the integral domain $\mathbb{T}^d$ in the following way: $\mathbb{T}^d = A_\delta(t)\cup A_\delta(t)^c$
for given $t\in\mathbb{T}$ where
\begin{equation*}
A_\delta(t) = \bigcup_{1\leq i\neq j\leq d+1} A_{\delta,i,j}(t)\quad \text{with}\quad A_{\delta,i,j}(t):=\{(t_1,\cdots, t_d)\in \mathbb T^{d}: \|t_i-t_j\|\leq \delta\}.
\end{equation*}
Notice that we can write
\begin{equation}
\begin{aligned}
    &\mathbb{E} \bigl[ \|\ast^d M_n(t) - \ast^d M_m(t)\|_1^{\frac{1}{2}} \bigr]
    = \mathbb{E} \biggl[ \biggl( \int_{\mathbb{T}} \biggl| \int_{\mathbb{T}^d} [F_n(\hat{t}, t) - F_m(\hat{t}, t)] \, \mathrm{d}\hat{t} \biggr| \, \mathrm{d}t \biggr)^{\frac{1}{2}} \biggr]. 
\end{aligned} \label{decomposition-ultimate-goal}
\end{equation}
The last term is bounded by
\begin{equation*}
         \mathbb{E} \biggl[ \biggl( \int_{\mathbb{T}} \biggl| \int_{A_\delta(t)^c} [F_n(\hat{t}, t) - F_m(\hat{t}, t)] \, \mathrm{d}\hat{t} \biggr| \, \mathrm{d}t \biggr)^{\frac{1}{2}} \biggr] + L_n^{(\delta)} + L_m^{(\delta)}
\end{equation*}
where 
$$
L_n^{(\delta)} := \mathbb{E} \biggl[ \biggl( \int_{\mathbb{T}} \int_{A_\delta(t)} F_n(\hat{t}, t) \, \mathrm{d}\hat{t} \, \mathrm{d}t \biggr)^{\frac{1}{2}} \biggr].
$$
This estimation is a simple consequence of the sub-addtivity $\sqrt{a+b}\leq\sqrt{a}+\sqrt{b}\,\,(\forall\, a,b\geq0)$ and the H\"older inequality.

Hence we will completes the proof of Theorem~\ref{multiple_convolution_result} by proving the following two limits.

\begin{proposition}\label{energy-integral} Under the assumptions of Theorem~\ref{multiple_convolution_result},
\begin{equation*}
\lim_{\delta\rightarrow0+}\sup_{n\geq1}L_n^{(\delta)}=0,\text{ i.e. }\lim_{\delta\rightarrow0+}\sup_{n\geq1}\mathbb{E} \biggl[ \biggl( \int_{\mathbb{T}} \int_{A_\delta(t)} F_n(\hat{t}, t) \, \mathrm{d}\hat{t} \, \mathrm{d}t \biggr)^{\frac{1}{2}} \biggr] = 0. 
\end{equation*}
\end{proposition}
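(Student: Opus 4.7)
The plan is to apply Jensen's inequality (concavity of $x\mapsto\sqrt{x}$) to pull the expectation inside the square root, then invoke Fubini to swap it with the double integral; this reduces the proposition to showing that $\sup_n \int_\mathbb{T}\int_{A_\delta(t)}\mathbb{E}[F_n(\hat t,t)]\,d\hat t\,dt \to 0$ as $\delta \to 0$. Decomposing $A_\delta(t)=\bigcup_{1\le i<j\le d+1}A_{\delta,i,j}(t)$ and using the union bound, it suffices to bound one pair's contribution, say $(1,d+1)$. After substituting $u=t-\sum_{k=1}^d t_k$ and shifting by $-u$ via the translation-invariance of $\{M_n(t)\}_{t\in\mathbb{T}}$, this contribution becomes
\[
|\mathbb{T}|\int_{\mathbb{T}^d}\mathbf{1}_{\|s_1\|\le\delta}\,\mathbb{E}[M_n(0)M_n(s_1)\cdots M_n(s_d)]\,ds.
\]

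The core step will be a uniform-in-$n$ upper bound on the multi-point moment $\mathbb{E}[M_n(0)M_n(s_1)\cdots M_n(s_d)]$ extending Lemma~\ref{lemma_two_product}. Starting from the exact identity $\mathbb{E}[\prod_{i=0}^d P_k(s_i)]=(1-|\bigcup_{i=0}^d(s_i-\ell_k,s_i)|)/(1-\ell_k)^{d+1}$ (with $s_0:=0$), and combining inclusion--exclusion for the volume of the union of intervals with the elementary estimate $e^{-x-cx^2}\le 1-x\le e^{-x}$ (and the summability $\sum_k\ell_k^2<\infty$ granted by \eqref{B}), one derives a pointwise bound in terms of the kernel $K$. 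Given such an estimate, the integrand is dominated by a fixed $L^1(\mathbb{T}^d)$ function independent of $n$, whose integrability is exactly hypothesis \eqref{L1condition_RC}; the dominated convergence theorem then forces the integral to $0$ as $\delta\to 0$, uniformly in $n$.

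The main obstacle will be producing the multi-point moment bound in precisely the form matched by \eqref{L1condition_RC}. A direct pairwise Bonferroni argument yields $\mathbb{E}[\prod_i M_n(s_i)]\le C\prod_{i<j}K(s_i-s_j)$, which after integration (fixing $s_0=0$) demands a strictly stronger integrability condition than \eqref{L1condition_RC} when $d\ge 3$ (for instance, for $K(t)\sim|t|^{-\alpha}$ it requires $\alpha<2/(d+1)$ versus $\alpha<d/(d+1)$). Closing this gap will likely require a sharper inclusion--exclusion that reduces the number of singular factors per configuration, or an integrated bound via Young-type convolution inequalities---for instance, exploiting the Fourier-analytic reformulation $\widehat K\in\ell^{d+1}$ of \eqref{L1condition_RC} (Parseval). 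Identifying the correct intermediate estimate is the technical heart of the proof.
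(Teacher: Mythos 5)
Your proposal has a genuine gap, and you have in fact located it yourself: the multi-point moment bound. By applying Jensen at the outset you reduce everything to controlling $\mathbb{E}[F_n(\hat t,t)]=\mathbb{E}[M_n(t_1)\cdots M_n(t_{d+1})]$ pointwise, and the only bound you can produce by pairwise Bonferroni, $C\prod_{i<j}K(t_i-t_j)$, is (as you note) not integrable under hypothesis \eqref{L1condition_RC} once $d\geq 3$. The suggested fixes (``sharper inclusion--exclusion,'' Young/Parseval reformulations) are left entirely speculative, so the argument does not close. Moreover, the Jensen step is where the approach goes wrong strategically: the exponent $\tfrac12$ in $L_n^{(\delta)}$ is not an obstacle to be removed by concavity --- it is the tool that lets one split a \emph{pathwise} product of two random factors by Cauchy--Schwarz, and discarding it forces you into exactly the $(d+1)$-point correlation estimate you cannot prove.

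The paper's proof avoids any multi-point kernel bound. On $A_{\delta,i,j}(t)$, after integrating out $t$ and the $d-1$ variables other than $t_i,t_j$ (each such integral producing a factor $\mu_n(\mathbb{T})$ by translation invariance), the inner integral factorizes \emph{almost surely} as
\[
\Bigl(\int_{\|t_i-t_j\|\leq\delta}M_n(t_i)M_n(t_j)\,\mathrm{d}t_i\mathrm{d}t_j\Bigr)\cdot\mu_n(\mathbb{T})^{d-1}.
\]
Taking the square root and applying Cauchy--Schwarz in $\mathbb{E}$ separates the two factors; the first is handled by the two-point estimate of Lemma~\ref{lemma_two_product} together with $K\in L^1$ (a consequence of \eqref{B}), and the second by Fan's theorem that the martingale $\mu_n(\mathbb{T})$ converges in every $L^p$, so that $\sup_n\mathbb{E}[\mu_n(\mathbb{T})^{d-1}]<\infty$. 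In effect the paper outsources all higher-order moment control to a known result on the total mass, whereas your route requires a new pointwise correlation inequality calibrated to \eqref{L1condition_RC} that you have not supplied. Your argument does go through as written for $d\leq 2$ (where the pairwise product coincides with the integrand of \eqref{L1condition_RC} up to a sign change), but not in the generality of the proposition.
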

\begin{proposition}\label{L1conver}Under the assumptions of Theorem~\ref{multiple_convolution_result}
$$
\lim_{n,m\rightarrow0}\mathbb{E} \biggl[ \biggl( \int_{\mathbb{T}} \biggl| \int_{\mathbb{T}^d} [F_n(\hat{t}, t) - F_m(\hat{t}, t)] \, \mathrm{d}\hat{t} \biggr| \, \mathrm{d}t \biggr)^{\frac{1}{2}} \biggr] = 0.
$$
\end{proposition}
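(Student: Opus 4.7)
The plan proceeds by interpreting Proposition~\ref{L1conver} with $\mathbb{T}^d$ replaced by $A_\delta(t)^c$ for each fixed $\delta>0$ (the $A_\delta(t)$ contribution being absorbed by the $L_n^{(\delta)} + L_m^{(\delta)}$ terms via Proposition~\ref{energy-integral}). Two Jensen-type reductions---first $\mathbb{E}[X^{1/2}]\leq(\mathbb{E}[X])^{1/2}$, then Cauchy--Schwarz on $(\mathbb{T},dt)$ of unit mass---bring the statement to a second-moment estimate:
\[
\mathbb{E}\Bigl[\Bigl(\int_{\mathbb{T}}\Bigl|\int_{A_\delta^c(t)}(F_n-F_m)\,d\hat{t}\Bigr|\,dt\Bigr)^{1/2}\Bigr] \leq \Bigl(\mathbb{E}\Bigl[\int_{\mathbb{T}}\Bigl(\int_{A_\delta^c(t)}(F_n-F_m)\,d\hat{t}\Bigr)^{2}dt\Bigr]\Bigr)^{1/4},
\]
so it suffices to show the right-hand side tends to $0$ as $n,m\to\infty$.

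Expand the square and apply Fubini. For $n\leq m$, use $M_m(s)=M_n(s)\prod_{k=n+1}^{m}P_k(s)$ together with the independence of $(P_k)_{k>n}$ from $\mathcal{F}_n := \sigma(\omega_1,\ldots,\omega_n)$. Setting
$\alpha_{n,m}(\hat{t}):=\prod_{k=n+1}^{m}\mathbb{E}[\prod_i P_k(t_i)]$ and $\beta_{n,m}(\hat{t}_1,\hat{t}_2):=\prod_{k=n+1}^{m}\mathbb{E}[\prod_i P_k(t_{1,i})\prod_j P_k(t_{2,j})]$, the mixed expectations factor as $\mathbb{E}[F_n(\hat{t}_1)F_m(\hat{t}_2)] = \mathbb{E}[F_n(\hat{t}_1)F_n(\hat{t}_2)]\cdot\alpha_{n,m}(\hat{t}_2)$ etc., giving
\[
\mathbb{E}\bigl[(F_n-F_m)(\hat{t}_1,t)(F_n-F_m)(\hat{t}_2,t)\bigr] = \mathbb{E}[F_n(\hat{t}_1,t)F_n(\hat{t}_2,t)]\cdot\bigl[(1-\alpha_1)(1-\alpha_2)+(\beta-\alpha_1\alpha_2)\bigr].
\]
On $A_\delta^c$, whenever $\ell_k<\delta$ the intervals $\{(t_i-\ell_k,t_i)\}_i$ are disjoint, so $\mathbb{E}[\prod_i P_k(t_i)]=(1-(d+1)\ell_k)/(1-\ell_k)^{d+1}=1+O(\ell_k^2)$. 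Since $\sum\ell_k^2<\infty$ by \eqref{B}, $\alpha_{n,m}\to 1$ uniformly on $A_\delta^c$, and the first piece $(1-\alpha_1)(1-\alpha_2)\to 0$ uniformly. A parallel inclusion-exclusion applied to the union of $2(d+1)$ intervals yields
\[
\log\frac{\beta_{n,m}(\hat{t}_1,\hat{t}_2)}{\alpha_{n,m}(\hat{t}_1)\alpha_{n,m}(\hat{t}_2)} \leq \sum_{i,j=1}^{d+1}\sum_{k>n}(\ell_k-|t_{1,i}-t_{2,j}|)_{+} + O\bigl(\sum_{k>n}\ell_k^{2}\bigr),
\]
which tends to $0$ as $n\to\infty$ pointwise outside the (Lebesgue-null) set of coinciding cross-points.

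The principal (and remaining) technical step is to close the argument by dominated convergence, i.e., to produce an integrable dominator for $\mathbb{E}[F_n(\hat{t}_1)F_n(\hat{t}_2)]\cdot\beta_{n,m}/(\alpha_1\alpha_2)$ on $A_\delta^c\times A_\delta^c\times\mathbb{T}$. The naive pairwise bound $\mathbb{E}[\prod_a M_n(s_a)]\leq C\prod_{a<b}K(s_a-s_b)$ gives up to $\binom{2(d+1)}{2}$ kernel factors and is typically not integrable. The resolution is a sharpened chain-type estimate, obtained by pushing the inclusion-exclusion further so that the triple-and-higher interval overlaps cancel to leading order in $\ell_k$, leaving only the kernels $K(s_a-s_{a+1})$ indexed by adjacent points in the sorted order of the $2(d+1)$-point configuration (this is the natural multi-point extension of Lemma~\ref{lemma_two_product}). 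Integration against $dt_{1}\cdots dt_{d+1}$ under the constraint $\sum_i t_{1,i}=\sum_j t_{2,j}=t$ then reduces the dominator to an integral of the form $K^{\ast(d+1)}(0)\cdot C(\delta)$, finite by \eqref{L1condition_RC}.
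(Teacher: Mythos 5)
Your overall skeleton is sound and, at the structural level, parallels the paper's strategy: restrict to $A_\delta(t)^c$ (the complement being handled by Proposition~\ref{energy-integral}), pass to second moments, exploit the martingale factorization $M_m=M_n\prod_{k=n+1}^{m}P_k$ and the independence of $(\omega_k)_{k>n}$ to reduce the Cauchy property to the convergence of the deterministic products $\alpha_{n,m}$ and $\beta_{n,m}$, and close with dominated convergence. Your algebra for the cross terms and the uniform convergence $\alpha_{n,m}\to1$ on $A_\delta^c$ are correct. However, the step you yourself call ``the principal (and remaining) technical step'' --- producing an integrable dominator for $\mathbb{E}[F_n(\hat{t}_1,t)F_n(\hat{t}_2,t)]$ uniformly in $n$ --- is precisely the crux of the whole proposition, and it is left unproved. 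The ``sharpened chain-type estimate'' over the sorted $2(d+1)$-point configuration is never derived, and even granting it, the assertion that integrating the chain of adjacent kernels under the two affine constraints reduces to $K^{\ast(d+1)}(0)\cdot C(\delta)$ is stated without proof; the combinatorics of which adjacent gaps can be simultaneously small (and how that matches the single convolution condition \eqref{L1condition_RC}) is genuinely nontrivial. So as written the proposal establishes the pointwise limits but not the domination, hence not the proposition.

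It is worth seeing how the paper closes this gap, because the mechanism is simpler than the one you propose. First, translation invariance (Lemma~\ref{lem-independent-of-t}) freezes $t=0$, so only the single random variable $J_n=\int_{A_\delta(0)^c}F_n(\hat{t},0)\,\mathrm{d}\hat{t}$ needs an $L^2$ bound. Then $\mathbb{T}$ is partitioned into intervals of length $\delta/2$: on $A_\delta(0)^c$ each configuration $(t_1,\dots,t_{d+1})$ lies in $d+1$ pairwise separated blocks, and restricting $\hat{t}$ and $\hat{t}'$ to the \emph{same} block pattern forces $\|t_i-t'_j\|\geq\delta/2$ for $i\neq j$. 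Hence only the $d+1$ diagonal pairs $(t_i,t'_i)$ can be close, no triple overlaps of the $\ell_k$-intervals occur, and the two-point computation of Lemma~\ref{lemma_two_product} yields the clean bound \eqref{dominated-cov-condition}, $\mathbb{E}[F_nF_n]\leq C'\prod_{i=1}^{d+1}K(t_i-t'_i)$, which is integrable by \eqref{L1condition_RC} after substituting $u_i=t_i-t'_i$ (note $\sum_{i=1}^{d+1}u_i=0$). The same separation mechanism already defuses your stated worry: on $A_\delta^c\times A_\delta^c$ the within-group kernels are bounded by $K(\delta)$ and at most $d+1$ of the $(d+1)^2$ cross-kernels can be singular at any given configuration, so no chain estimate is needed. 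You should either import this block decomposition or carry out your chain estimate in full; without one of these, the dominated-convergence step, and with it the proof, does not go through.
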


\subsection{Proof of Proposition~\ref{energy-integral}}
The proof of the proposition is rather easy. It depends on the following result contained in \cite{Fan95}:
Under the assumption \ref{B}, 
the martingale \( \mu_n (\mathbb{T}) \) converges in \( L^p \) for all \( p \geq 1 \).

        Note that $(t,\hat{t})\in \mathbb T \times A_\delta(t)$ means that there are $i\neq j$ such that $\|t_i-t_j\| \leq \delta$. The number of choices of such pair $(i,j)$ is $d(d+1)$. Then using one more the above mentioned sub-additivity inequality, we get
        \begin{equation}
            L_n^{(\delta)} \leq \sum_{1\leq i\neq j\leq d+1} L_{n,i,j}^{(\delta)} \label{L_n_i_j}
        \end{equation}
        where 
        $$
        L_{n,i,j}^{(\delta)} = \mathbb{E} \biggl[ \biggl( \int_\mathbb{T}\int_{A_{\delta,i,j}(t)} F(\hat{t},t)\, \mathrm{d}\hat{t} \, \mathrm{d}t \biggr)^{\frac{1}{2}} \biggr].
        $$

To estimate $L_{n,i,j}^{(\delta)}$, we distinguish two cases: $1\leq i\neq j\leq d$ and $1\leq i < j =d+1$. Assume first $1\leq i\neq j\leq d$.
Notice that by the translation invariant of Lebesgue measure on $\mathbb{T}$ we have
$$
\int_{\mathbb T} M_n(t-\sum_{i=1}^dt_i) \mathrm{d}t =\int_\mathbb{T}M_n(t_k)dt_k= \mu_n(\mathbb{T}).
$$
This together with Fubini's theorem and the symmetry of $t_1,\cdots,t_d$ in $F(\hat{t},t)$, gives us
$$
L_{n,i,j}^{(\delta)}=L_{n,1,2}^{(\delta)}=\mathbb E\left[\left(\int_{\|t_1-t_2\|\leq \delta}M_n(t_1)M_n(t_2)\mathrm{d}t_1\mathrm{d}t_2 \right)^\frac{1}{2}\mu_n(\mathbb{T})^{\frac{d-1}{2}}\right].
$$

Assume now $1\leq i< j= d+1$. Making the change of variable $s_k=t_k$ $(1\leq k\leq d)$, and $s_{d+1}=t-\sum_{n=1}^dt_n$, which is a measure-preserving transformation on $\mathbb{T}^d$ with respect to $\|\cdot\|$, we get
$$
L_{n,i,d+1}^{(\delta)} = \mathbb E\left[\left(\int_{\|s_i-s_{d+1}\|\leq \delta}M_n(s_i)M_n(s_{d+1})\mathrm{d}s_i\mathrm{d}s_{d+1} \right)^\frac{1}{2}\mu_n(\mathbb{T})^{\frac{d-1}{2}}\right] = L_{n,1,2}^{(\delta)}.
$$

Hence, by Cauchy-Schwartz inequality, from \eqref{L_n_i_j}, we get
$$
L_n^{(\delta)}\leq d(d+1)  \left(\mathbb E\left[\mu_n(\mathbb{T})^{d-1}\right]\right)^\frac{1}{2}\left(\mathbb E \left[\int_{\|t_1-t_2\|\leq \delta}M_n(t_1)M_n(t_2)\mathrm{d}t_1\mathrm{d}t_2\right]\right)^\frac{1}{2}.
$$

        By Fan's result stated above, $\mathbb{E}\left[\mu_n(\mathbb{T})^{d-1}\right]\leq C^2$ for some $C>0$ and any $n\geq1$. Then by Fubini's theorem and Lemma~\ref{two_product_expectation_estimate}, we get
        \begin{align*}
                      L_n^{(\delta)} \leq d(d+1)C
                      \left(\int_{\|t_1-t_2\|\leq \delta}K(t_1-t_2)\mathrm{d}t_1\mathrm{d}t_2\right)^\frac{1}{2}.
        \end{align*}
       This completes the proof of proposition~\ref{energy-integral}.

\subsection{Proof of Proposition~\ref{L1conver}} 
\subsubsection{Reduction}We first get rid of the exponent $\frac{1}{2}$ and prove a stronger result than needed:
$$
\lim_{n,m\rightarrow\infty} \int_{\mathbb{T}} \mathbb{E}\biggl[\left| \int_{A_\delta(t)^c} \left(F_n(\hat{t}, t) - F_m(\hat{t}, t) \right)\, \mathrm{d}\hat{t} \right|\biggr] \, \mathrm{d}t=0.
$$
This is because of the H\"older inequality and Fubini's theorem. First we observe that the above expectation is indeed independent of $t$.
\begin{lemma}\label{lem-independent-of-t}
    For any $t\in\mathbb{T}$, one has 
    $$
    \mathbb{E}\biggl[ \left|\int_{A_\delta(t)^c} \left(F_n(\hat{t}, t) - F_m(\hat{t}, t)\right) \, \mathrm{d}\hat{t} \right|\biggr] = \mathbb{E}\biggl[ \left|\int_{A_\delta(0)^c} \left(F_n(\hat{t}, 0) - F_m(\hat{t}, 0)\right) \, \mathrm{d}\hat{t} \right|\biggr].
    $$
\end{lemma}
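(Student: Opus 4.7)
The plan is to reduce the integral at a general $t$ to the integral at $0$ by an explicit measure-preserving change of variables on $\mathbb{T}^d$, followed by the translation invariance in law of the random density $M_n(\cdot)$. The intuition is that in $F_n(\hat{t},t) = M_n(t_1)\cdots M_n(t_d) M_n(t-\sum_i t_i)$, the parameter $t$ only shifts the last coordinate $t_{d+1}$; if we shift all $d+1$ coordinates by the same constant, we can move $t$ to $0$ and pay for this only by a global shift of the $\omega_k$'s, which leaves their joint law unchanged.

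Concretely, I would set $s := t/(d+1)$ and $u_i := t_i - s$ for $1\le i\le d$. This change of variables is measure-preserving on $\mathbb{T}^d$. With the convention $u_{d+1} := -\sum_{k=1}^d u_k$, a direct computation gives $t_{d+1} = t - \sum_{k=1}^d t_k = s - \sum_{k=1}^d u_k = u_{d+1} + s$, so every one of the $d+1$ coordinates is shifted by the same constant $s$ relative to the $u$-coordinates. It follows that $\|t_i - t_j\| = \|u_i - u_j\|$ for all $1\le i,j\le d+1$, so the set $A_\delta(t)^c$ in the $\hat{t}$-variables is exactly the image of $A_\delta(0)^c$ in the $\hat{u}$-variables. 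In these new variables,
\begin{equation*}
F_n(\hat{t},t) \;=\; \prod_{k=1}^{d+1} M_n(u_k + s).
\end{equation*}

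The second step is translation invariance. Since $\omega_1,\omega_2,\ldots$ are i.i.d.\ uniform on $\mathbb{T}$, the shifted sequence $(\omega_k - s \bmod 1)_{k\ge 1}$ has the same joint law as $(\omega_k)_{k\ge 1}$. Using the definition of $P_k$, one checks that $P_k(u+s)$ computed from $(\omega_k)$ equals $P_k(u)$ computed from $(\omega_k - s)$, hence the random function $u\mapsto M_n(u+s)$ has the same law (jointly in $u$) as $u\mapsto M_n(u)$. Therefore the random variable $\int_{A_\delta(0)^c} \prod_{k=1}^{d+1} M_n(u_k + s)\,d\hat{u}$ has the same law as $\int_{A_\delta(0)^c} F_n(\hat{u},0)\,d\hat{u}$, and the same identity holds for the difference $F_n - F_m$ when both are computed from the same $\omega$.

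Putting the two steps together, the random variable $\int_{A_\delta(t)^c} (F_n(\hat{t},t)-F_m(\hat{t},t))\,d\hat{t}$ has the same distribution as the corresponding expression at $t=0$, so their expected absolute values coincide. There is no real obstacle here; the only point worth attention is the bookkeeping verifying that every pairwise distance $\|t_i - t_j\|$ is preserved under the common shift, which is why the choice of constant $s = t/(d+1)$ (making the shift symmetric over all $d+1$ coordinates, including $t_{d+1}$) is essential.
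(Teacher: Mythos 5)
Your proposal is correct and follows essentially the same route as the paper: the same symmetric change of variables $s_i = t_i - t/(d+1)$ (which shifts all $d{+}1$ coordinates, including $t_{d+1}$, by the common constant $t/(d+1)$ and maps $A_\delta(t)^c$ onto $A_\delta(0)^c$), followed by the same appeal to the translation invariance of the law of the i.i.d.\ uniform $\omega_k$'s to identify the distribution of the shifted integrand with that at $t=0$.
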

to the integral $\int_{A_\delta(t)^c} \left(F_n(\hat{t}, t) - F_m(\hat{t}, t)\right) \, \mathrm{d}\hat{t}$. Note that 
\begin{proof}Firstly, for the integral inside the expectation, we introduce a substitution that uniformly shifts the variables, to remove the dependence on $t$ of variable $t_{d+1} = t - \sum_{k=1}^{n} t_k$.
Let $s_i=t_i-\frac{t}{d+1}$ $(i=1,\cdots,n)$ then
$$
t_{d+1} = t-\sum_{i=1}^\mathrm{d}t_i = \frac{t}{d+1}-\sum_{i=1}^ds_i.
$$
    The set $A_\delta(t)^c$ consists of the points $(t_1,\cdots,t_d)$ such that any two of $t_1,\cdots,t_d,t_{d+1}$ has distance at least $\delta$. Under the change of variable the conditions \( \|t_i - t_j\| = \|s_i - s_j\| \geq \delta \) for \( 1\leq i \neq j\leq d \) are unaffected by the shift. The conditions involving $t_{d+1}$ transform as:
\[
\left\|t_j - (t - \sum_{i=1}^d t_i) \right\| = \left\| s_j - \sum_{i=1}^d s_i  \right\|\geq \delta.
\]
Hence the set $A_\delta(t)^c$ becomes 
    $$
    A_\delta(0)^c = \{\hat{s}=(s_1,\cdots,s_d)\in\mathbb{T}^d: \|s_i-s_j\|\geq\delta, 1\leq i\neq j\leq d+1\}
    $$
    where $s_{d+1} = -\sum_{i=1}^ds_i$.

By definition of $F_n$ in \eqref{def_F_n}, one has
        \begin{align*}
            F_n(\hat{t}, t)
            =\left(\prod_{i=1}^dM_n(s_i+\frac{t}{d+1})\right)M_n(\frac{t}{d+1}-\sum_{i=1}^ds_i)= F_n(\hat{s}+\frac{t}{d+1}, \frac{t}{d+1})
        \end{align*}
where $\hat{s}+\frac{t}{d+1} = (s_1+\frac{t}{d+1},\cdots,s_d+\frac{t}{d+1})$.

   Therefore the integral becomes 
    \begin{equation}
           \int_{A_\delta(t)^c} \left(F_n(\hat{t}, t) - F_m(\hat{t}, t)\right) \, \mathrm{d}\hat{t} = \int_{A_\delta(0)^c} \left(F_n(\hat{s}+\frac{t}{d+1}, \frac{t}{d+1}) - F_m(\hat{s}+\frac{t}{d+1}, \frac{t}{d+1})\right) \, d\hat{s}.\label{change-variable}
    \end{equation}
 
    By the definition of $P_k(t)=\frac{1-\textbf{1}_{(0,\ell_k)}(t-\omega_k)}{1-\ell_k}$, there is a bounded measurable function  $g$ on $\mathbb{T}$
    such that 
    $$ P_k(t) = g(\omega_k-t).$$

Hence
$$
F_n(\hat{s}+\frac{t}{d+1}, \frac{t}{d+1})=\prod_{k=1}^n\left(g(\omega_k-\frac{t}{d+1}+\sum_{i=1}^ds_i)\prod_{i=1}^{d}g(\omega_k-\frac{t}{d+1}-s_i)\right).
$$
    For any $k\geq 1$, since $\omega_k$ follows uniform distribution on $ \mathbb T $, its distribution is invariant under translation, and thus
    $$
     F_n(\hat{s}+\frac{t}{d+1},\frac{t}{d+1}) \overset{d}{=} F_n(\hat{s},0)
    $$
  which together with \eqref{change-variable} implies
   \begin{align*}
           \mathbb{E}\biggl[ \left|\int_{A_\delta(t)^c} \left(F_n(\hat{t}, t) - F_m(\hat{t}, t)\right) \, \mathrm{d}\hat{t} \right|\biggr] 
           =\mathbb E\left[\left|\int_{A_\delta(0)^c}  \left(F_n(\hat{s}, 0) - F_m(\hat{s}, 0)\right) \, d\hat{s}\right|\right].
   \end{align*}
\end{proof}

Lemma~\ref{lem-independent-of-t} tells us that what we have to prove is
$$
 J_n:=\int_{A_\delta(0)^c} F_n(\hat{t}, 0)  \, \mathrm{d}\hat{t}\, \text{ is a Cauchy sequence } L^1(\mathbb P),
$$
because this implies Proposition~\ref{L1conver}. We can prove a little bit more:
\begin{proposition}
Under the assumptions of Theorem~\ref{multiple_convolution_result},
    $J_{n}$ is a Cauchy sequence in $L^2(\mathbb P)$. \label{L2-cauchy}
\end{proposition}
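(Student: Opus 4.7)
The plan is to exhibit $J_n$, after a multiplicative renormalization, as a non-negative $L^2$-bounded martingale and invoke the $L^2$-bounded martingale convergence theorem.

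For the martingale structure, choose $N_\delta$ with $\ell_n<\delta$ for $n\geq N_\delta$. On $A_\delta(0)^c$ the $d+1$ points $t_1,\ldots,t_d,-\sum_{i=1}^d t_i$ are pairwise $\delta$-separated, so for $n\geq N_\delta$ the random interval of length $\ell_n$ can cover at most one of them. A direct inclusion-exclusion (in the spirit of the proof of Lemma~\ref{lemma_two_product}) yields
$$\mathbb{E}\!\left[\prod_{i=1}^{d+1}P_n(t_i)\,\Big|\,\mathcal F_{n-1}\right]=c_n:=\frac{1-(d+1)\ell_n}{(1-\ell_n)^{d+1}},$$
independent of $\hat t\in A_\delta(0)^c$. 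Integrating over $A_\delta(0)^c$ gives $\mathbb{E}[J_n\mid\mathcal F_{n-1}]=c_n J_{n-1}$. Since $1-c_n=O(\ell_n^2)$ and $\sum\ell_n^2<\infty$ by assumption~\eqref{B}, the infinite product $C_\infty:=\prod_{n\geq N_\delta}c_n$ converges to a strictly positive constant, so $\tilde J_n:=J_n/\prod_{k=N_\delta}^n c_k$ is a non-negative martingale for $n\geq N_\delta$; $L^2$-convergence of $\tilde J_n$ is equivalent to that of $J_n$.

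The decisive estimate is $\sup_n\mathbb{E}[J_n^2]<\infty$. Extending the inclusion-exclusion/Taylor-expansion used in Lemma~\ref{lemma_two_product} to the $2(d+1)$-point correlation gives
$$\mathbb{E}[F_n(\hat t,0)F_n(\hat s,0)]\leq C\!\!\prod_{1\leq a<b\leq 2(d+1)}\!\!K(u_a-u_b),$$
where $u_1,\ldots,u_{2(d+1)}$ denote the concatenation of the two tuples. On $(A_\delta(0)^c)^2$ the within-group factors $K(t_i-t_{i'})$ and $K(s_j-s_{j'})$ are all $\leq K(\delta)$, so up to a constant
$$\mathbb{E}[J_n^2]\leq C'\int\!\!\int_{(A_\delta(0)^c)^2}\prod_{i,j=1}^{d+1}K(t_i-s_j)\,\mathrm d\hat t\,\mathrm d\hat s. \qquad(\ast)$$

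The main obstacle is to prove $(\ast)<\infty$ under \eqref{L1condition_RC}, because the $(d+1)^2$ cross factors exceed the $(d+1)$-fold product the hypothesis controls. I address this with a \emph{permutation-diagonal decomposition}: for each permutation $\pi$ of $\{1,\ldots,d+1\}$ let $R_\pi:=\{(\hat t,\hat s):\|t_i-s_{\pi(i)}\|<\delta/3\text{ for every }i\}$. Within-group $\delta$-separation forces the $R_\pi$'s to be pairwise disjoint, since two of them meeting would yield $\|s_{\pi(i)}-s_{\pi'(i)}\|<2\delta/3<\delta$ for some $i$. On $R_\pi$ the off-diagonal factors $K(t_i-s_j)$ with $j\neq\pi(i)$ are all $\leq K(2\delta/3)$, and the unit-Jacobian change of variables $u_i:=t_i-s_{\pi(i)}$ enforces $\sum_i u_i=0$, reducing the contribution of $R_\pi$ to a constant multiple of the integral in \eqref{L1condition_RC}, which is finite. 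On the residual region $R_0:=(A_\delta(0)^c)^2\setminus\bigcup_\pi R_\pi$, the close cross-pairs form only a partial matching of size $<d+1$, so at most $d$ of the $K$-factors are potentially singular while the rest are again bounded by $K(2\delta/3)$; their integral is finite using $K\in L^1(\mathbb T)$ (implied by the assumptions on $\ell_n$). Summing the $(d+1)!$ permutation contributions and the $R_0$ piece proves $(\ast)$ is bounded uniformly in $n$; hence $\tilde J_n$ is $L^2$-bounded, $\tilde J_n\to\tilde J_\infty$ in $L^2$, and $J_n=\prod_{k=N_\delta}^n c_k\cdot\tilde J_n\to C_\infty\tilde J_\infty$ in $L^2$, so $(J_n)$ is Cauchy in $L^2(\mathbb P)$.
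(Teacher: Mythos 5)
Your proof is correct, but it reaches the conclusion by a genuinely different route in both of its main steps. For the convergence mechanism, the paper computes $\mathbb{E}[(J_n-J_m)^2]$ directly, showing $\limsup_n\mathbb{E}[J_n^2]\leq A\leq\liminf_{n,m}\mathbb{E}[J_nJ_m]$ by splitting off a neighbourhood $S_\eta$ of the diagonal (controlled by domination via \eqref{dominated-cov-condition} and \eqref{L1condition_RC}) and using uniform convergence of the correlation kernel $\prod_k\mathbb{E}[\prod_iP_k(t_i)P_k(t_i^\prime)]$ off $S_\eta$ (Lemmas~\ref{lim<part} and \ref{uniform-convergence}). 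You instead observe that for $n\geq N_\delta$ the conditional expectation $\mathbb{E}[\prod_{i=1}^{d+1}P_n(t_i)\mid\mathcal F_{n-1}]=c_n$ is constant on $A_\delta(0)^c$ (because the $d+1$ intervals of length $\ell_n<\delta$ attached to $\delta$-separated points are disjoint), so that $J_n$ is a martingale up to the deterministic product $\prod c_k$, which converges since $1-c_k=O(\ell_k^2)$ and $\sum\ell_k^2<\infty$; $L^2$-bounded martingale convergence then replaces the paper's $\eta$-splitting entirely. This is cleaner and exploits the same multiplicative structure the paper only uses implicitly. For the $L^2$-bound itself (the paper's Lemma~\ref{L^2bounded}), the paper partitions $\mathbb{T}$ into boxes of length $\delta/2$ and applies Minkowski's inequality in $L^2(\mathbb P)$, which forces the only nearby pairs to be the matched ones $(t_i,t_i^\prime)$ and yields the $(d+1)$-fold bound \eqref{dominated-cov-condition} directly; you instead keep the full $(d+1)^2$ cross-product of $K$-factors and tame it with a permutation-diagonal decomposition. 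Your disjointness argument for the $R_\pi$, the identity $\sum_i(t_i-s_{\pi(i)})=0$ reducing each $R_\pi$ to the integral in \eqref{L1condition_RC}, and the treatment of the residual region via $K\in L^1(\mathbb T)$ (which indeed follows from \eqref{B}, or simply from \eqref{L1condition_RC} since $K\geq1$) are all sound; the only nitpicks are cosmetic (the unmatched factors on $R_0$ are bounded by $K(\delta/3)$ rather than $K(2\delta/3)$, and the residual region should be further split according to which partial matching occurs, a finite decomposition you leave implicit). The paper's box decomposition is slightly more economical because it never produces the off-diagonal singular pairs your permutations are designed to absorb, but your argument buys a softer convergence proof that avoids computing the limiting kernel $H(\hat t,\hat t^\prime)$ altogether.
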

The proof of Proposition~\ref{L2-cauchy} follows an elegant method in \cite{Ber17} designed to deal with Gaussian multiplicative chaos and the key step is to show $J_{n}$ is $L^2$-bounded.

\subsubsection{$L^2$-Boundedness of $J_{n}$}
\begin{lemma} Under the assumptions of Theorem~\ref{multiple_convolution_result}, 
        $J_{n}$ is $L^2$-bounded.
    \label{L^2bounded}
\end{lemma}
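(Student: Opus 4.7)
The plan is to bound $\mathbb E[J_n^2]$ uniformly in $n$. First, Fubini gives
$$\mathbb E[J_n^2] = \int_{A_\delta(0)^c \times A_\delta(0)^c} \mathbb E\bigl[F_n(\hat t,0) F_n(\hat s,0)\bigr]\,d\hat t\,d\hat s.$$
The integrand is a product of $2(d+1)$ copies of $M_n$ at the multiset $U=\{t_1,\dots,t_d,-\sum t_i,s_1,\dots,s_d,-\sum s_j\}$, so by independence of the $\omega_k$'s it factorises across $k$. I would then extend Lemma~\ref{lemma_two_product} to this multi-point setting: inclusion--exclusion for $\lambda\bigl(\bigcup_{u\in U}(u-\ell_k,u)\bigr)$, the elementary inequalities $1-x\le e^{-x}$ and $1-x\ge e^{-x-cx^2}$, summation over $k$, and $\sum\ell_n^2<\infty$ (from \eqref{B}) should give
$$\mathbb E\bigl[F_n(\hat t,0) F_n(\hat s,0)\bigr] \le C\prod_{u<u'\in U} K(u-u').$$

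On $A_\delta(0)^c\times A_\delta(0)^c$, every within-block pair distance exceeds $\delta$, so each within-block factor is bounded by $K(\delta)<\infty$; absorbing these into a constant $C_\delta$ reduces the task to showing
$$I := \int_{A_\delta(0)^c\times A_\delta(0)^c}\prod_{i,j=1}^{d+1}K(u_i^{(t)}-u_j^{(s)})\,d\hat t\,d\hat s < \infty,$$
where $u_{d+1}^{(t)}=-\sum_{i=1}^d t_i$ and analogously for $\hat s$. This cross-block integral is the main obstacle.

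To handle $I$, I would exploit the following \emph{``at most one close''} observation: the $\delta$-separation of the $u_j^{(s)}$'s on $A_\delta(0)^c$ means that, for any $x\in\mathbb T$, at most one of the distances $\|x-u_j^{(s)}\|$ can be below $\delta/2$. Since $K$ is even and non-increasing in $\|\cdot\|$, this gives $\prod_j K(x-u_j^{(s)})\le K(\delta/2)^d\sum_j K(x-u_j^{(s)})$. Applied with $x=u_i^{(t)}$ for every $i$ and expanded, the integrand is dominated by $K(\delta/2)^{d(d+1)}$ times a sum of $(d+1)^{d+1}$ terms of the form $\prod_{i=1}^{d+1}K(u_i^{(t)}-u_{\pi(i)}^{(s)})$ indexed by maps $\pi\colon[d+1]\to[d+1]$.

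For each such $\pi$, the measure-preserving shift $v_i=u_i^{(t)}-u_{\pi(i)}^{(s)}$ turns the constraint $\sum u_i^{(t)}=0$ into $\sum v_i=-\Sigma_\pi$ with $\Sigma_\pi=\sum_i u_{\pi(i)}^{(s)}$, so the inner integral over $\hat t$ equals the $(d+1)$-fold self-convolution $K^{*(d+1)}(-\Sigma_\pi)$. Each $(\ell_n-|\cdot|)_+$ has non-negative Fourier coefficients $(\sin(\pi n\ell_n)/(\pi n))^2$, and non-negativity is preserved under summation, termwise exponentiation and convolution, so $K^{*(d+1)}$ is non-negative and maximised at $0$; therefore $K^{*(d+1)}(-\Sigma_\pi)\le K^{*(d+1)}(0)$, which is exactly the integral in \eqref{L1condition_RC} and hence finite by hypothesis. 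Integrating the resulting constant over $\hat s\in A_\delta(0)^c$ (a set of measure at most $1$) yields $I\le C_\delta(d+1)^{d+1}K^{*(d+1)}(0)$, giving $\sup_n\mathbb E[J_n^2]<\infty$ as required.
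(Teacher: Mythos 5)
Your argument is correct in substance but follows a genuinely different route from the paper. The paper never forms the full pairwise product over all $2(d+1)$ points: instead it partitions $\mathbb{T}$ into intervals $B_1,\dots,B_K$ of length at most $\delta/2$, uses sub-additivity of $x\mapsto x^{1/2}$ to reduce to terms where $t_i$ and $t_i^\prime$ lie in the \emph{same} block $B_{j_i}$ while distinct indices lie in $\delta/2$-separated blocks; then the event $\{\omega_k\notin\bigcup_i(t_i-\ell_k,t_i)\cup(t_i^\prime-\ell_k,t_i^\prime)\}$ can be computed exactly for $\ell_k<\delta/2$, and only the diagonal overlaps $(\ell_k-|t_i-t_i^\prime|)_+$ survive, giving directly $\mathbb{E}[F_nF_n]\leq C\prod_iK(t_i-t_i^\prime)$, whose integral is \eqref{L1condition_RC} after the substitution $u_i=t_i-t_i^\prime$. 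You instead keep all $\binom{2(d+1)}{2}$ factors via the standard multipoint correlation bound, discard the within-block ones using the $\delta$-separation on $A_\delta(0)^c$, and then tame the $(d+1)^2$ cross factors with the ``at most one close'' pigeonhole step and a reduction to $K^{\ast(d+1)}$. Both work; the paper's block decomposition is what lets it avoid your cross-term analysis altogether, while your version is more self-contained on the probabilistic side but imports an extra harmonic-analytic fact.

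That extra fact is the one place you should be more careful. The inequality $K^{\ast(d+1)}(x)\leq K^{\ast(d+1)}(0)$ is true, but your Fourier justification is not quite airtight as stated: $K^{\ast(d+1)}$ is only known a priori to be an $L^1$ function with non-negative Fourier coefficients and a finite value at the single point $0$, and passing from $\widehat{K^{\ast(d+1)}}\geq 0$ to a \emph{pointwise} maximum at $0$ requires either summability of the coefficients or continuity at $0$, neither of which is immediate. Two clean fixes: (i) since $K$ is even and non-increasing in $\|t\|$, the Riesz/Brascamp--Lieb--Luttinger rearrangement inequality on $\mathbb{T}$ gives $K^{\ast(d+1)}(x)\leq K^{\ast(d+1)}(0)$ directly; or (ii) bypass the pointwise bound entirely by integrating in $\hat{s}$: for bijective $\pi$ one has $\Sigma_\pi\equiv 0$ and the term is exactly $K^{\ast(d+1)}(0)<\infty$, while for non-bijective $\pi$ the map $\hat{s}\mapsto-\Sigma_\pi(\hat{s})$ is a surjective group homomorphism $\mathbb{T}^d\to\mathbb{T}$ preserving Haar measure, so $\int_{\mathbb{T}^d}K^{\ast(d+1)}(-\Sigma_\pi)\,\mathrm{d}\hat{s}=\|K\|_1^{d+1}<\infty$. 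With either repair your proof is complete.
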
 
\begin{proof}
By Fubini's theorem, we have\begin{equation}
    \mathbb{E}\left[J_n^2\right] = \int_{A_\delta(0)^c}\int_{A_\delta(0)^c}\mathbb{E}\left[F_n(\hat{t},0)F_n(\hat{t}^\prime,0)\right]\mathrm{d}\hat{t}\mathrm{d}\hat{t}^\prime. \label{Fubini-of-J_n}
\end{equation}

    Note that for any $\delta > 0$, $\mathbb T$ can be divided into at most $K\triangleq ([\frac{2}{\delta}]+1)$ sub-intervals denoted $B_1,\cdots, B_k$, such that the length of each interval is at most $\frac{\delta}{2}$.

    Hence, by the definition of $A_\delta(0)$, for any $(\theta_1,\cdots,\theta_d)\in A_\delta(0)$, there are $d+1$ disjoint intervals denoted $B_{j_1},\cdots, B_{j_{d+1}}$ such that $t_i\in B_{j_{i}}$ and $dist(B_{j_i},B_{j_k}) \geq \frac{\delta}{2}$ for any $i\neq k \in \{1,\cdots,d+1\}$ where $t_{d+1} =-\sum\limits_{i=1}^{d}t_i.$ Note that the number of the choices of $B_{j_1},\cdots, B_{j_{d+1}}$ are at most $\binom{K}{d+1}$, which, together with the above mentioned sub-additivity and Fubini's theorem, implies
\begin{equation}
\begin{split}
J_n^\frac{1}{2} &\leq \sum_{B_{j_1}, \cdots , B_{j_{d}}} \left(\mathbb E\left[(\int_{B_{j_1} \times \cdots \times B_{j_{d}}}F_{n}(\hat{t},0)\text{1}_{B_{d+1}}(t_{d+1})\mathrm{d}\hat{t}\,)^2\right]\right)^\frac{1}{2}\\
= \sum_{B_{j_1}, \cdots , B_{j_{d}}} &\left(\int_{(B_{j_1} \times\cdots \times B_{j_{d}})^2}\mathbb E\left[F_n(\hat{t},0)F_n(\hat{t}^\prime,0)\right]\textbf{1}_{B_{j_{d+1}}}(t_{d+1})\textbf{1}_{B_{j_{d+1}}}(t_{d+1}^\prime)\mathrm{d}\hat{t}\mathrm{d}\hat{t}^\prime\right)^\frac{1}{2}. 
\end{split}\label{expension-of-J_n}
\end{equation}

Since $\{\omega_k\}_{k}$ is an i.i.d sequence, we can decompose
$$\mathbb E\left[F_n(\hat{t},0)F_n(\hat{t}^\prime,0)\right] = \prod_{k=1}^n\mathbb{E}\left[\prod_{i=1}^{d+1}P_k(t_i)P_k(t_i^\prime)\right].$$

By the fact $\sum_{k=1}^\infty\ell_k^2<\infty$, there is $N\geq1$, only depending on $\delta$, such that 
\( \ell_k < \delta/2 \) for any \(k\geq N\). Given that each \( t_i, t_i^\prime \in B_{j_i} \), and the \( B_{j_i} \) are pairwise separated by a distance of at least \( \delta/2 \), it follows that for \( \ell_k < \delta/2 \), then the sets \( (t_i - \ell_k, t_i) \cup (t_i^\prime - \ell_k, t_i^\prime) \) for distinct \( i \) do not intersect. Hence, if $k>N$,
\begin{equation}\label{calculation-of-de}
\begin{split}
&\mathbb{P}\left[\omega_k \notin \bigcup_{i=1}^{d+1} (t_i - \ell_k, t_i) \cup (t_i^\prime - \ell_k, t_i^\prime)\right]
= 1 - 2(d+1)\ell_k + \sum_{k=1}^{d+1} (\ell_k - |t_i - t_i^\prime|)_+
\end{split}
\end{equation}
where $|A|$ means the Lebesgue measure of measurable set $A \subset\mathbb{T}$.

Therefore there is a $C>0$ which is only depending on $N$ such that 
$$
\mathbb E\left[F_n(\hat{t},0)F_n(\hat{t}^\prime,0)\right]\leq C\prod_{k=1}^n\frac{1 - 2(d+1)\ell_k +\sum_{i=1}^{d+1} (\ell_k - |t_i - t_i^\prime|)_+}{(1-\ell_k)^{2(d+1)}}.
$$
Following the exactly same argument of Lemma~\ref{lemma_two_product}, one can deduce that
\begin{equation}
    \mathbb E\left[F_n(\hat{t},0)F_n(\hat{t}^\prime,0)\right] \leq C^\prime K(t_1-t_1^\prime) K(t_2-t_2^\prime) \cdots K(t_{d+1}-t_{d+1}^\prime) \label{dominated-cov-condition}
\end{equation}
then the desired result follows from \eqref{expension-of-J_n} and the assumption \eqref{L1condition_RC}.
\end{proof}

\subsubsection{$L^2$-Convergence of $J_{n}$}
Observe that 
\[
\mathbb{E}\left[(J_n - J_m)^2\right] = \mathbb{E}\left[ J_n^2\right] + \mathbb{E} \left[J_m^2\right] - 2 \mathbb{E}\left[J_n J_m\right].
\]
The $L^2$-convergence of $J_n$ will follows if we can prove that
there exists a constant \( A > 0 \) such that
\[
\limsup_{n \to \infty} \mathbb{E} \left[J_n^2\right] \leq A \leq \liminf_{n,m \to \infty} \mathbb{E}\left[J_n J_m\right].
\]
Actually, we will prove these inequalities with 
$$
A = \int_{A_\delta(0)^c}\int_{A_\delta(0)^c}  H(\hat{t}, \hat{t}^\prime)\mathrm{d}\hat{t}\mathrm{d}\hat{t}^\prime
$$
where 
$$
H(\hat{t}, \hat{t}^\prime) :=  \prod_{k=1}^\infty \mathbb{E} \biggl[ \prod_{i=1}^{d+1} P_k(t_i) P_k(t_i^\prime) \biggr].
$$
\begin{lemma}
    Under the assumptions of Theorem~\ref{multiple_convolution_result},
    $$
    \limsup_{n \to \infty} \mathbb{E} \left[J_n^2\right] \leq A\qquad \text{and}\qquad\liminf_{n,m \to \infty} \mathbb{E}[J_n J_m]\geq A.
    $$
    \label{lim<part}
\end{lemma}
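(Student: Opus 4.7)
The plan is to compute both $\mathbb{E}[J_n^2]$ and $\mathbb{E}[J_nJ_m]$ as iterated integrals via Fubini and the independence of the $\{\omega_k\}$, identify the pointwise a.e.\ limits of the integrands on $A_\delta(0)^c\times A_\delta(0)^c$, and exchange limit with integral using the integrable majorant $C\prod_{i=1}^{d+1}K(t_i-t_i')$ supplied by \eqref{dominated-cov-condition}. The two inequalities will then follow from reverse Fatou and ordinary Fatou respectively, after which the identity $\mathbb{E}[(J_n-J_m)^2]=\mathbb{E}[J_n^2]+\mathbb{E}[J_m^2]-2\mathbb{E}[J_nJ_m]$ yields Proposition~\ref{L2-cauchy}.

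For the first bound, set $H_n(\hat{t},\hat{t}'):=\prod_{k=1}^n\mathbb{E}\bigl[\prod_{i=1}^{d+1}P_k(t_i)P_k(t_i')\bigr]$, so Fubini gives $\mathbb{E}[J_n^2]=\int\!\int H_n\,\mathrm{d}\hat{t}\,\mathrm{d}\hat{t}'$. On the full-measure set where $t_i\neq t_j'$ for all $i,j$, once $\ell_k$ is strictly smaller than $\delta$ and than $\min_{i,j}|t_i-t_j'|$, the $k$th factor reduces to $(1-2(d+1)\ell_k)/(1-\ell_k)^{2(d+1)}=1+O(\ell_k^2)$; the $O(\ell_k^2)$ tail is summable by \eqref{B}, so $H_n\to H$ pointwise a.e. Reverse Fatou applied against the integrable majorant from \eqref{dominated-cov-condition} then yields $\limsup_n\mathbb{E}[J_n^2]\le\int H=A$.

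For the second bound, take WLOG $n\le m$ and use independence once more to obtain
\[
\mathbb{E}[J_nJ_m]=\int\!\int H_n(\hat{t},\hat{t}')\,R_{n,m}(\hat{t}')\,\mathrm{d}\hat{t}\,\mathrm{d}\hat{t}',\qquad R_{n,m}(\hat{t}'):=\prod_{k=n+1}^{m}\mathbb{E}\Bigl[\prod_{i=1}^{d+1}P_k(t_i')\Bigr].
\]
Since the $t_i'$ are pairwise $\delta$-separated on $A_\delta(0)^c$, once $\ell_k<\delta/2$ each factor of $R_{n,m}$ equals $(1-(d+1)\ell_k)/(1-\ell_k)^{d+1}$, which by Bernoulli's inequality lies in $(0,1]$ and differs from $1$ by $O(\ell_k^2)$. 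Hence $R_{n,m}\le 1$ for $n$ sufficiently large and $R_{n,m}\to 1$ pointwise, so the nonnegative integrand $H_nR_{n,m}$ converges a.e.\ to $H$; ordinary Fatou then yields $\liminf_{n,m}\mathbb{E}[J_nJ_m]\ge A$. The anticipated main obstacle is handling the singular locus $\{t_i=t_j'\text{ for some }i,j\}$, where both $H$ and the majorant blow up; this is harmless because the majorant is integrable over $\mathbb{T}^d\times\mathbb{T}^d$ after the change of variables $u_i=t_i-t_i'$ combined with assumption~\eqref{L1condition_RC}.
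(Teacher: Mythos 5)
Your proof is correct, and it reaches the two inequalities by a somewhat different mechanism than the paper. The paper fixes an auxiliary parameter $\eta>0$, splits $(A_\delta(0)^c)^2$ into the near-diagonal set $S_\eta=\{\|t_i-t_i'\|\le\eta \text{ for some }i\}$ and its complement, shows the $S_\eta$-contribution is small uniformly in $n$ via the majorant \eqref{dominated-cov-condition} and \eqref{L1condition_RC}, and proves \emph{uniform} convergence of the partial products to $H$ on $S_\eta^c$; the cross term $\mathbb{E}[J_nJ_m]$ is handled by discarding the $S_\eta$-part (nonnegativity) and invoking the same uniform convergence. You instead work with pointwise a.e.\ convergence of $H_n$ on the full-measure set where all $t_i\neq t_j'$, and conclude by reverse Fatou against the same integrable majorant for the $\limsup$ and by ordinary Fatou for the $\liminf$, which removes the $\eta$-decomposition entirely. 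Your explicit factorization $\mathbb{E}[F_n(\hat t,0)F_m(\hat t',0)]=H_n(\hat t,\hat t')R_{n,m}(\hat t')$ with $R_{n,m}\le 1$ (Bernoulli) and $R_{n,m}\to 1$ is a cleaner way to justify what the paper asserts as ``following exactly the same argument'' for the cross term. Both routes rest on the same two ingredients --- the $n$-uniform domination \eqref{dominated-cov-condition}, integrable over $(A_\delta(0)^c)^2$ after the substitution $u_i=t_i-t_i'$ thanks to \eqref{L1condition_RC}, and the convergence of the infinite product governed by $\sum_k\ell_k^2<\infty$ --- so the content is equivalent; your version is shorter, while the paper's uniform-convergence statement (Lemma~\ref{uniform-convergence}) is reused verbatim for $\mathbb{E}[J_nJ_m]$ without recomputation.
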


The basic idea of proof of Lemma~\ref{<eta_part} is as follows. In \eqref{Fubini-of-J_n}, observe that when $t_i$ and $t_i^\prime$ are separated, the integrand in $\mathbb{E}\left[J_n^2\right]$ will converges uniformly, while when $t_i$ and $t_i^\prime$ lie close to each other, the integral can be controlled arbitrarily small due to \eqref{L1condition_RC} and \eqref{dominated-cov-condition}.

For any $\eta>0$, let $S_\eta:= \{(\hat{t},\hat{t}^\prime)\in A_\delta(0)^c\times A_\delta(0)^c: \|t_i-t_i^\prime\|\leq \eta,\, \text{for some }i=1,\cdots,d+1\}$, it is worthy to recall that $t_{d+1} = -\sum_{k=1}^dt_k$ when $t=0$. By \eqref{dominated-cov-condition}, we know that $\mathbb E\left[F_n(\hat{t},0)F_n(\hat{t}^\prime,0)\right]$ is dominated by $K(t_1-t_1^\prime) K(t_2-t_2^\prime) \cdots K(t_{d+1}-t_{d+1}^\prime)$ which is integrable due to \eqref{L1condition_RC}, then by Lebesgue dominated convergence theorem, we have
\begin{equation}
    \lim_{\eta\rightarrow 0}\sup_n\int_{(A_\delta(0)^c)^2\cap S_\eta}\mathbb E\left[F_n(\hat{t},0)F_n(\hat{t}^\prime,0)\right]\mathrm{d}\hat{t}\mathrm{d}\hat{t}^\prime = 0. \label{<eta_part}
\end{equation}

\begin{lemma}\label{uniform-convergence}
    Under the assumptions of Theorem~\ref{multiple_convolution_result},
    when $(\hat{t},\hat{t}^\prime)\in(A_\delta(0)^c)^2\cap S_\eta^c$, the following infinite product converges
    \[
H(\hat{t}, \hat{t}^\prime) :=  \prod_{k=1}^\infty \mathbb{E} \biggl[ \prod_{i=1}^{d+1} P_k(t_i) P_k(t_i^\prime) \biggr].  
\]
    and the following limit holds
    \[\lim_{n\rightarrow\infty}\sup_{(\hat{t},\hat{t}^\prime)\in(A_\delta(0)^c)^2\cap S_\eta^c}\mathbb E\left[F_n(\hat{t},0)F_n(\hat{t}^\prime,0)\right] \rightarrow  H(\hat{t}, \hat{t}^\prime) \label{uniform-convergence}.
    \]
\end{lemma}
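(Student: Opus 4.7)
My plan is to use the independence of $\{\omega_k\}_{k\geq1}$ to factor
\[
\mathbb{E}[F_n(\hat{t},0)F_n(\hat{t}^\prime,0)]=\prod_{k=1}^{n}a_k(\hat{t},\hat{t}^\prime),\qquad a_k(\hat{t},\hat{t}^\prime):=\mathbb{E}\Bigl[\prod_{i=1}^{d+1}P_k(t_i)P_k(t_i^\prime)\Bigr],
\]
and then show that each factor satisfies $a_k=1+O(\ell_k^2)$ uniformly on $(A_\delta(0)^c)^2\cap S_\eta^c$ for $k$ beyond some threshold $N=N(\delta,\eta)$. Assumption \eqref{B} will then guarantee $\sum_k\ell_k^2<\infty$, forcing uniform convergence of the tail $\prod_{k\geq N}a_k$; the remaining finitely many factors are uniformly bounded thanks to \eqref{dominated-cov-condition} together with the fact that $K$ is bounded on $\|t\|\geq\eta$.

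The explicit formula for $a_k$ follows from the same one-line computation behind \eqref{calculation-of-de}: writing $U_k=\bigcup_{i=1}^{d+1}\bigl((t_i-\ell_k,t_i)\cup(t_i^\prime-\ell_k,t_i^\prime)\bigr)$ one has $a_k=(1-|U_k|)/(1-\ell_k)^{2(d+1)}$. Choose $N$ so that $\ell_k<\min(\delta,\eta)$ for every $k\geq N$; then on the good set the $2(d+1)$ interval centres are pairwise separated by at least $\min(\delta,\eta)$, so the intervals are disjoint, $|U_k|=2(d+1)\ell_k$, and a two-term Taylor expansion of $(1-\ell_k)^{-2(d+1)}$ yields
\[
a_k=\frac{1-2(d+1)\ell_k}{(1-\ell_k)^{2(d+1)}}=1+O(\ell_k^2)
\]
uniformly in $(\hat{t},\hat{t}^\prime)$. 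The standard infinite-product criterion, combined with $\sum_{k\geq N}\ell_k^2<\infty$, shows that $H(\hat{t},\hat{t}^\prime)=\prod_{k=1}^\infty a_k$ is well defined and that $\prod_{k=1}^n a_k\to H$ uniformly on the set.

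The main obstacle lies in securing the pairwise separation of all $2(d+1)$ interval centres. The condition $(A_\delta(0)^c)^2\cap S_\eta^c$ directly bounds only the distances among $\{t_i\}_i$, among $\{t_i^\prime\}_i$, and the diagonal $\|t_i-t_i^\prime\|$; the off-diagonal distances $\|t_i-t_j^\prime\|$ for $i\neq j$ are a priori uncontrolled and can in fact vanish (for example in the $d=1$ case, the identity $t_2=-t_1$ can make $t_1$ coincide with $t_2^\prime=-t_1^\prime$). The remedy I would adopt is to work on the slightly smaller set where these off-diagonal distances also exceed $\eta$; the excluded piece, a tubular neighbourhood of a lower-dimensional subvariety of $(A_\delta(0)^c)^2$, contributes negligibly to the outer integral by the same dominated-convergence argument producing \eqref{<eta_part}, and so can be folded into the $S_\eta$-piece of the subsequent splitting without loss.
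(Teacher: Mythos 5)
Your core computation is exactly the paper's: factor $\mathbb{E}[F_nF_n]$ over $k$ by independence, identify each factor as $(1-|U_k|)/(1-\ell_k)^{2(d+1)}$, observe that once all $2(d+1)$ interval centres are $\ell_k$-separated this equals $\frac{1-2(d+1)\ell_k}{(1-\ell_k)^{2(d+1)}}=1+O(\ell_k^2)$, and conclude via $\sum_k\ell_k^2<\infty$. To that extent the proposal and the paper coincide.

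The more important point is the obstacle you isolate, and here you are right and the paper is not: the set $(A_\delta(0)^c)^2\cap S_\eta^c$ controls the distances $\|t_i-t_j\|$, $\|t_i'-t_j'\|$ ($i\neq j$) and $\|t_i-t_i'\|$, but \emph{not} the off-diagonal distances $\|t_i-t_j'\|$ for $i\neq j$, which can vanish on this set (your $d=1$ example $t_1+t_1'\approx 0$ is a genuine one: take $t_1=1/4$, $t_1'=3/4+\sigma$, which satisfies all the constraints while $\|t_1-t_2'\|=\sigma$ is arbitrarily small). The paper's proof simply asserts that ``all of the $\ell_k$-length intervals are disjoint,'' which is false there; worse, on the coincidence subvariety the factor is $\approx e^{\ell_k}$ and the infinite product diverges, so the uniform convergence claimed in the lemma fails on the full set as stated. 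Catching this is to your credit.

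However, your proposed repair is not yet a proof. To absorb the off-diagonal tube $\{\|t_i-t_j'\|\le\eta\}$ into the $S_\eta$-piece you must show $\sup_n\int_{\text{tube}}\mathbb{E}[F_nF_n']\to 0$ as $\eta\to 0$, and the dominated-convergence argument behind \eqref{<eta_part} does not apply there: the dominating function in \eqref{dominated-cov-condition} involves only the diagonal differences $K(t_i-t_i')$ and is not even a valid upper bound for $\mathbb{E}[F_nF_n']$ on the off-diagonal tube (it stays bounded while $\mathbb{E}[F_nF_n']$ blows up like $K(t_i-t_j')$). The honest domination is $C\prod_{a,b}K(t_a-t_b')$ over \emph{all} cross pairs, and its integrability over $(A_\delta(0)^c)^2$ does not follow from \eqref{L1condition_RC} without a further argument. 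A cleaner fix is already implicit in the paper: run the whole $L^2$-Cauchy argument on each piece $B_{j_1}\times\cdots\times B_{j_d}$ of the box decomposition used in Lemma~\ref{L^2bounded}. Within a single box-product one has $t_a,t_a'\in B_{j_a}$ with the boxes pairwise $\delta/2$-separated, so all off-diagonal pairs are automatically separated, \eqref{dominated-cov-condition} is valid, your tail estimate applies verbatim, and the finitely many box choices are summed at the end. Either way, the lemma needs this extra localization; as written, both the paper's proof and your patch leave the off-diagonal contribution uncontrolled.
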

\begin{proof}
    We may assume that $\eta<\frac{\delta}{2}$, and for the given $\eta$, by the fact $\sum_{k=1}^\infty\ell_k^2< \infty$, there is $M\geq1$ such that 
\( \ell_k\leq\eta \leq \frac{\delta}{2} \) for any \(k\geq M\).
Over the region $A_\delta(0)^c$ and $S_\eta^c$, for $k\geq M$, we not only have $\|t_i-t_j\|\geq\frac{\delta}{2}\geq\ell_k$, but also have $\|t_i-t_i^\prime\|\geq\eta\geq \ell_k$ which implies that all of $\ell_k$-length intervals are disjoint to each other, and thus
\begin{equation}\label{calculation-of-de-eta}
\begin{split}
\mathbb{P}\left[\omega_k \notin \bigcup_{i=1}^{d+1} (t_i - \ell_k, t_i) \cup (t_i^\prime - \ell_k, t_i^\prime)\right] 
=& 1 - 2(d+1)\ell_k
\end{split}
\end{equation}
which is independent of $t$ and $t^\prime$.

Hence for $k\geq M$,
$$\mathbb{E}\left[\prod_{i=1}^{d+1}P_k(t_i)P_k(t_i^\prime)\right] =\frac{1-2(d+1)\ell_k}{\left(1-\ell_k\right)^{2(d+1)}}$$ which implies $H(\hat{t},\hat{t}^\prime)$ exists and that
$$
\mathbb E\left[F_n(\hat{t},0)F_n(\hat{t}^\prime,0)\right] = \prod_{k=1}^n\mathbb{E}\left[\prod_{i=1}^{d+1}P_k(t_i)P_k(t_i^\prime)\right] \longrightarrow H(\hat{t},\hat{t}^\prime) 
$$
holds uniformly over $A_\delta(0)^c\times A_\delta(0)^c$ and $S_\eta^c$ as $n$ goes to $\infty$.
\end{proof}

Finally, we are ready to prove the Lemma~\ref{lim<part}.

\begin{proof}[Proof Lemma~\ref{lim<part}]
    For the first inequality, we split $\mathbb E\left[J_n^2\right]$ into two parts, for $0<\eta\leq\frac{\delta}{2}$,
\begin{equation}
\begin{split}
        &\mathbb E\left[J_n^2\right] =  \int_{A_\delta(0)^c}\int_{A_\delta(0)^c}\mathbb{E}\left[F_n(\hat{t},0)F_n(\hat{t}^\prime,0)\right]\mathrm{d}\hat{t}\mathrm{d}\hat{t}^\prime\\=&\left(\int_{(A_\delta(0)^c)^2\cap S_\eta}+ \int_{(A_\delta(0)^c)^2\cap (S_\eta)^c}\right)\mathbb E\left[F_n(\hat{t},0)F_n(\hat{t}^\prime,0)\right]\mathrm{d}\hat{t}\mathrm{d}\hat{t}^\prime.
\end{split}\label{squre-est}
\end{equation}
We denote the first and second integral as $I_1$ and $I_2$ respectively. We first let $n$ go to infinity and then $\eta$ go to 0 in order on both sides of \eqref{squre-est}.

For $I_1$, by \eqref{<eta_part}, one has 
$
\lim_{\eta\rightarrow0}\sup_n I_1 =0.
$

For $I_2$, by Lemma~\ref{uniform-convergence}, one has 
$$
\lim_{\eta\rightarrow0}\lim_{n\rightarrow\infty}I_2 = \int_{(A_\delta(0)^c)^2}H(\hat{t}, \hat{t}^\prime)\mathrm{d}\hat{t}\mathrm{d}\hat{t}^\prime.
$$
This completes the proof of the first inequality.

For the other inequality, by Fubini's theorem and the integrand is non-negative we have
\begin{equation*}
\begin{split}
        \mathbb E\left[J_nJ_m\right] = \int_{(A_\delta(0)^c)^2}\mathbb E\left[F_n(\hat{t},0)F_m(\hat{t}^\prime,0)\right]\mathrm{d}\hat{t}\mathrm{d}\hat{t}^\prime&\geq 
        \int_{(A_\delta(0)^c)^2\cap S_\eta^c}\mathbb E\left[F_n(\hat{t},0)F_m(\hat{t}^\prime,0)\right]\mathrm{d}\hat{t}\mathrm{d}\hat{t}^\prime.
\end{split}
\end{equation*}
Following exactly the same argument in Lemma~\ref{uniform-convergence}, we have $\mathbb E\left[F_n(\hat{t},0)F_m(\hat{t}^\prime,0)\right]$ is also uniformly convergent to $H(\hat{t},\hat{t}^\prime)$ on $\left(A_\delta(0)^c\times A_\delta(0)^c\right)\cap S_\eta^c$.

Hence, let $n,m$ go to infinity and then $\eta$ go to $0$, we get
    $$
    \liminf_{n \to \infty} \mathbb{E}[J_nJ_m]\geq\int_{A_\delta(0)^c} \int_{A_\delta(0)^c} H(\hat{t}, \hat{t}^\prime)\mathrm{d}\hat{t}\mathrm{d}\hat{t}^\prime.
    $$
\end{proof}

\section*{Acknowledgments}  

Special thanks to Professor Fan Aihua for his invaluable guidance and suggestions during the preparation of this paper.

\newcommand{\etalchar}[1]{$^{#1}$}

\vspace{1.5em} 
\noindent
\begin{minipage}{\textwidth}
    \small 
    \begin{flushleft}
        \textsc{LAREMA, UMR 6093 CNRS, Université d’Angers,   Angers cedex, France},\\
        \textit{Email address:} \texttt{mingjie.tan@univ-angers.fr}
    \end{flushleft}
\end{minipage}

\end{document}